\newcommand{\Z}{\mathbb{Z}}
\newcommand{\N}{\mathbb{N}}
\newcommand{\signal}[1]{#1_{\mathrm{sig}}}
\newcommand{\hsignal}[1]{#1_{\mathrm{hsig}}}
\newcommand{\vsignal}[1]{#1_{\mathrm{vsig}}}
\newcommand{\north}[1]{#1_{\mathrm{north}}}
\newcommand{\south}[1]{#1_{\mathrm{south}}}
\newcommand{\east}[1]{#1_{\mathrm{east}}}
\newcommand{\west}[1]{#1_{\mathrm{west}}}
\theoremstyle{plain}
\newtheorem{lemma}{Lemma}
\newtheorem{definition}[lemma]{Definition}
\newtheorem{corollary}[lemma]{Corollary}
\newtheorem{proposition}[lemma]{Proposition}
\newtheorem{theorem}[lemma]{Theorem}
\newtheorem{question}[lemma]{Question}
\theoremstyle{example}
\newtheorem{example}[lemma]{Example}
\title{Countable Sofic Shifts with a Periodic Direction}
\author{Ilkka T\"orm\"a\footnote{Reseasch supported by Academy of Finland grant 295095.} \\ University of Turku, Finland}
\begin{document}

\maketitle

\begin{abstract}
As a variant of the equal entropy cover problem, we ask whether all multidimensional sofic shifts with countably many configurations have SFT covers with countably many configurations.
We answer this question in the negative by presenting explicit counterexamples.
We formulate necessary conditions for a vertically periodic shift space to have a countable SFT cover, and prove that they are sufficient in a natural (but quite restricted) subclass of shift spaces.
\end{abstract}

\section{Introduction}

In the context of one-dimensional symbolic dynamics, the structural properties and connections between shifts of finite type (SFTs for short) and sofic shifts are quite well understood.
They can be represented as sets of infinite walks on finite directed graphs, with labeled edges or vertices in the case of sofic shifts.
Furthermore, to each sofic shift one can associate several SFT covers that are in some sense minimal: Fischer covers, Krieger covers, past set covers et cetera.
In particular, these covers have the same topological entropy as the corresponding sofic shifts, and are topologically transitive if and only if the sofic shifts are.
Several subclasses, such as shifts of almost finite type, can be defined via relations between a sofic shift and its set of SFT covers.
See \cite{LiMa95} for an overview of this topic.

In multidimensional symbolic dynamics, this connection much weaker.
While each sofic shift has SFT covers whose entropy is arbitrarily close to their own \cite{De06}, it is not known whether equal entropy covers always exist.
The class of sofic shifts is likewise more nebulous.
Several sufficient and necessary conditions for soficity of a multidimensional shift space are known, but between them remains a rather large gap.
These conditions are both computational \cite{DuLeSh08,Ho09,DuRoSh12,AuSa13} and geometric \cite{KaMa13,Pa13,OrPa16}, and recently a new class of non-sofic shift spaces was announced that fools all previously known soficness tests \cite{DeRo18}.

In this article, we concentrate on the subclass of countable shift spaces, that is, shift spaces with countably many configurations.
The research of this particular class was initiated in \cite{BaDuJe08} in the context of tiling systems.
In one dimension, countable sofic shifts are exactly those with zero topological entropy, but in the multidimensional case not all zero entropy sofic shifts are countable.
Our main result settles an analogue of the equal entropy cover problem of multidimensional sofic shifts: whether every countable sofic shift has a countable SFT cover.
The answer is negative, and our surprisingly succinct topological proof presents an explicit counterexample.
The counterexample is vertically constant, meaning that each of its configurations has $(0,1)$ as a vector of periodicity, and this periodicity is an essential part of the proof.
It readily generalizes to an arbitrary number of dimensions.
We also formulate three necessary conditions, one geometric and two computational, for possessing a countable SFT cover, and prove that they are sufficient for the class of shift spaces whose horizontal rows come from a one-dimensional countable sofic shift.
It is left as an open problem whether the conditions are sufficient in general.

\section{Definitions}

\subsection{Symbolic Dynamics}

Let $\Sigma$ be a finite alphabet and $d \geq 1$.
The set $\Sigma^{\Z^d}$ equipped with the product topology is the \emph{$d$-dimensional full shift}, and its elements are called \emph{configurations}.
We will mostly concentrate on the cases $d = 1$ and $d = 2$.
The group $\Z^d$ acts on $\Sigma^{\Z^d}$ by shifting: $\sigma^{\vec v}(x)_{\vec w} = x_{\vec v + \vec w}$.

A \emph{shift space} is a topologically closed subset of $\Sigma^{\Z^d}$ which is invariant under the shift action.
Every shift space is defined by a set of \emph{forbidden finite patterns} as the set of configurations where none of these patterns occur.
If the set of forbidden patterns is finite, the corresponding shift space is a \emph{shift of finite type}, or SFT for short.
If the set of forbidden patterns (equivalently, all patterns not occurring in the shift) is computably enumerable, the shift space is \emph{effectively closed}.
In our constructions, $\Sigma$ will usually be a set of decorated square tiles, and the forbidden patterns are those where the decorations of neighboring tiles do not match.
Sometimes we will impose additional local constraints.

A configuration $x \in \Sigma^{\Z^d}$ is \emph{periodic} in a direction $\vec v \in \Z^d$, or that $\vec v$ is a \emph{period} of $x$, if $\sigma^{\vec v}(x) = x$.
We say $x$ is \emph{vertically constant}, if the basis vectors $\vec e_2, \ldots, \vec e_d$ are periods of $x$, and \emph{vertically periodic}, if some of their multiples are periods of $x$.
For dimensions greater than 2, we thus view the first coordinate as `horizontal' and the rest as `vertical'.
A one-dimensional configuration $y \in \Sigma^\Z$ is \emph{ultimately periodic to the right}, if there exists $p > 0$ such that $x_n = x_{n+p}$ for all large enough $n \in \Z$.
Ultimate periodicity to the left is defined analogously.
To help with the construction of vertically constant sofic shifts, we use the following powerful result.

\begin{lemma}[\cite{DuRoSh12,AuSa13}]
  \label{lem:BlackBox}
  For $d \geq 2$, every effectively closed vertically constant $\Z^d$-shift space is sofic.
\end{lemma}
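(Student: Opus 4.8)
The plan is to reduce the statement to the two-dimensional case and then invoke the simulation machinery of \cite{DuRoSh12,AuSa13}. First I would observe that an effectively closed vertically constant $\Z^d$-shift space $X$ is essentially one-dimensional: since every $x \in X$ depends only on its first coordinate, the map sending $x$ to its horizontal sequence is a topological conjugacy from $(X,\sigma^{\vec e_1})$ onto $(Y,\text{shift})$, where $Y \subseteq \Sigma^{\Z}$ is the set of horizontal sequences occurring in $X$; in particular $Y$ is a $\Z^1$-subshift and $X = Y^\uparrow$ is the vertically constant lift of $Y$ (a sequence of $Y$ repeated along $\vec e_2,\ldots,\vec e_d$). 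I claim $Y$ is effectively closed. A word $w \in \Sigma^*$ fails to be a factor of a sequence in $Y$ exactly when the one-dimensional pattern $w$ does not appear in $X$; by a compactness argument this holds precisely when there is a finite window $W$ containing the support of $w$ such that every pattern on $W$ extending $w$ contains a forbidden pattern of $X$, and since $W$ is finite this last condition is semi-decidable by enumerating finitely many forbidden patterns of $X$ at a time. Hence the language of $Y$ is co-computably-enumerable, i.e.\ $Y$ is effectively closed.

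Next I would reduce $\Z^d$ to $\Z^2$. Suppose $\tilde X \subseteq (\Sigma\times\Gamma)^{\Z^2}$ is a $\Z^2$-SFT whose $\Sigma$-projection is the two-dimensional vertically constant lift of $Y$. Define a $\Z^d$-SFT by imposing the defining patterns of $\tilde X$ inside every $(\vec e_1,\vec e_2)$-coordinate plane and additionally forcing the $\Sigma$-layer to be constant along $\vec e_3,\ldots,\vec e_d$. A configuration survives these rules iff on each plane its $\Sigma$-part is a vertically constant lift of a sequence of $Y$ and moreover it is constant in the remaining directions; this is exactly the $d$-dimensional vertically constant lift of $Y$, namely $X$. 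So it suffices to treat $d = 2$.

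For $d = 2$, the soficity of $Y^\uparrow$ for an effectively closed $Y$ is what the constructions in \cite{DuRoSh12,AuSa13} yield, and I would present it as follows. Build a $\Z^2$-SFT over $\Sigma\times\Gamma$ whose $\Gamma$-layer carries a self-similar hierarchical structure (a fixed-point tile set, or a Robinson-type construction) laying out computation zones of all sizes at all horizontal positions; inside a zone a fixed Turing machine reads the $\Sigma$-symbols along the bottom row of the zone and runs the semi-algorithm searching for a factor of that finite word that is forbidden in $Y$, raising a tiling error whenever it succeeds. A local rule forces the $\Sigma$-layer to be vertically constant, so the bottom row of every zone agrees with the configuration's unique horizontal sequence $y$. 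If $y \in Y$ no search ever halts, so every $\Sigma$-configuration in $Y^\uparrow$ extends to a valid configuration; conversely, if some factor of $y$ is forbidden, it occurs at some horizontal position and a sufficiently large zone over that position eventually detects it, so every valid configuration has $y \in Y$. Thus the $\Sigma$-projection of the SFT is $Y^\uparrow$, and pulling this back as above gives the $\Z^d$ statement.

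The main obstacle is the hierarchical self-simulating construction with computation zones of unbounded size densely covering the plane --- precisely the part supplied by \cite{DuRoSh12,AuSa13} --- so for us the genuine work lies in the reductions: verifying that the horizontal sequences of an effectively closed vertically constant shift again form an effectively closed shift, checking that the $\Z^2 \to \Z^d$ lift preserves both finiteness of type and the $\Sigma$-projection, and confirming that the simulation construction accommodates \emph{arbitrary} vertically constant $\Sigma$-content, i.e.\ that the computation layer constrains the $\Sigma$-layer only through membership of its horizontal sequence in $Y$ and in no other way.
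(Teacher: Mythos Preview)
The paper does not prove this lemma at all; it is quoted verbatim from \cite{DuRoSh12,AuSa13} and used as a black box (the internal label \texttt{lem:BlackBox} says as much). So there is no ``paper's own proof'' to compare against.

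Your outline is a correct sketch of how the cited results are usually stated and used: identify the horizontal trace $Y$, check that $Y$ is effectively closed by a compactness argument, apply the two-dimensional simulation theorem to realize the vertically constant lift $P^\dag(Y)$ as a projection of a $\Z^2$-SFT, and then extend trivially to $\Z^d$. One minor simplification in the $\Z^2 \to \Z^d$ step: rather than forcing only the $\Sigma$-layer to be constant along $\vec e_3,\ldots,\vec e_d$ and letting the $\Gamma$-layer vary independently on each plane, it is cleaner to force the \emph{entire} $(\Sigma\times\Gamma)$-configuration to be constant in those directions. The resulting $\Z^d$-SFT is then topologically conjugate to the $\Z^2$-SFT, and the projection statement is immediate without having to verify that independent $\Gamma$-choices on different planes do not interfere. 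Your version is also correct, just slightly more to check.
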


The orbit closure of $x \in \Sigma^{\Z^d}$ under a single shift map $\sigma^{\vec v}$ is denoted $\mathcal{O}_{\sigma^{\vec v}}(x)$.
A configuration $x \in \Sigma^{\Z^2}$ is \emph{uniformly recurrent in the direction $\vec v \in \Z^2$}, if the orbit closure $\mathcal{O}_{\sigma^{\vec v}}(x)$ is a minimal system under $\sigma^{\vec v}$.
This is equivalent to the following condition: for each finite pattern $p$ that occurs in $x$ at some $\vec w \in \Z^2$, there exists $K \in \N$ such that for each $n \in \Z$, there exists $m \in [n-K, n+K]$ such that $p$ occurs in $x$ at $\vec w + m \vec v$.
In particular, if $x$ is $\vec v$-periodic, then it is uniformly recurrent in the direction $\vec v$.

For $e \leq d$, the $e$-dimensional \emph{projective subdynamics} of a shift space $X \subset \Sigma^{\Z^d}$ is denoted $P_e(X) \subset \Sigma^\Z$, and it consists of the restriction to $\Z^e$ of each configuration of $X$; we choose $\Z^e$ as the subgroup of $\Z^d$ where the last $d-e$ coordinates have value $0$.
We denote $P_1(X) = P(X)$.
In the two-dimensional case, $P(X)$ consists of the central horizontal row of each configuration of $X$.
If $Y \subset \Sigma^{\Z^e}$ is a shift space, then $P_e^\dag(Y) \subset \Sigma^{\Z^d}$ is the vertically constant shift space with $P_e(P_e^\dag(Y)) = Y$.

A \emph{block code} is a continuous map $\psi : X \to Y$ between two $\Z^d$-shift spaces that intertwines the shift actions.
It is defined by a finite \emph{neighborhood} $N \subset \Z^d$ and a \emph{local rule} $\Psi : \Sigma^N \to \Sigma$, in the sense that $\psi(x)_{\vec v} = \Psi(\sigma^{\vec v}(x)|_N)$ holds for all $x \in X$ and $\vec v \in \Z^d$.
The image of an SFT under a block code is a \emph{sofic shift}, and the SFT and block code form its \emph{SFT cover}.

An \emph{edge shift} is a one-dimensional SFT $X_G$ defined by a finite directed graph $G = (V, E)$.
The configurations of the edge shift are the edges of all bi-infinite paths in the graph, so it is a subset of $E^\Z$.
An edge labeling $\lambda : E \to \Gamma$ induces a block map from $X_G$ to $\Gamma^\Z$ by acting on each edge, and $\lambda$ is \emph{right-resolving} if for each vertex $v \in V$, $\lambda$ is injective on the outgoing edges of $v$.
Each one-dimensional sofic shift $Y$ has an SFT cover consisting of a right-resolving labeling of an edge shift.
We make use of the following result about the structure of one-dimensional countable sofic shifts.

\begin{lemma}[Lemma~4.8 in~\cite{PaSc15}]
  \label{lem:CountableStructure}
  Let $X \subset \Gamma^\Z$ be a one-dimensional countable sofic shift.
  Then there exists an edge shift $Y$ given by a graph $G = (V, E)$ and a right-resolving factor map $\psi : Y \to X$ such that $G$ is a disjoint union of components $K_1, \ldots, K_n$, where each $K_i$ consists of a finite number of vertex-disjoint cycles $c_{i,0}, \ldots, c_{i, m_i}$ with one marked vertex, and vertex-disjoint nonempty paths $p_{i,1}, \ldots, p_{i,m_i}$, where each $p_{i,j}$ connects the marked vertex of $c_{i,j-1}$ to the marked vertex of $c_{i,j}$.
\end{lemma}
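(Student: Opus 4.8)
The plan is to begin from an arbitrary right-resolving cover of $X$ and reshape its underlying graph into the stated form, letting countability control the reshaping. Since $X$ is sofic, it admits an SFT cover $\phi : X_H \to X$, where $H = (V_H, E_H)$ is a finite directed graph and $\phi$ is induced by a right-resolving edge labeling, as recalled above. A right-resolving labeling is right-closing and therefore finite-to-one, so each point of $X$ has only finitely many $\phi$-preimages; as $X$ is countable, $X_H$ is then a countable union of finite sets and hence countable. Deleting every vertex and edge lying on no bi-infinite path affects neither $X_H$ nor its image $X$ and preserves right-resolvingness, so I may assume every vertex of $H$ lies on a bi-infinite path.

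First I would determine the structure forced by countability: \emph{every strongly connected component of $H$ is a single simple cycle}. Indeed, if some component contained a vertex $v$ of out-degree at least two within the component, then strong connectivity would yield two distinct first-return loops at $v$, and concatenating them according to an arbitrary element of $\{1,2\}^{\Z}$ would produce uncountably many distinct bi-infinite paths, contradicting the countability of $X_H$. Collapsing each component to a point therefore exhibits the condensation of $H$ as a finite directed acyclic graph whose nontrivial nodes are the cycles of $H$, linked by connecting paths. Because this DAG is finite, any bi-infinite path loops inside a single cycle as $n \to -\infty$ and inside a single cycle as $n \to +\infty$, and between these extremes traverses one finite directed chain $D_0 \to D_1 \to \cdots \to D_k$ of cycles; thus every configuration of $X$ is the label of a path realizing such a chain.

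The decisive step is to \emph{unfold} the branching DAG into a disjoint union of linear chains. I would enumerate its finitely many maximal directed paths $\pi_1, \dots, \pi_n$ and build one component $K_i$ per $\pi_i$ from vertex-disjoint copies of the cycles along $\pi_i$, joined by a connecting path between consecutive cycles. For each cycle I would declare its \emph{exit} vertex — the one from which the connector to the next cycle departs — to be the marked vertex, and reroute each incoming connector so that it reaches this exit vertex, appending a fresh (duplicated) copy of the arc running from the connector's entry point around the cycle to the exit vertex. The crucial observation is that right-resolvingness constrains only outgoing edges: at each marked vertex the outgoing edges are exactly the cycle step and the connector step inherited from $H$, which carry distinct labels there, while every duplicated arc vertex has a single outgoing edge and every non-marked cycle vertex retains only its cycle step. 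Hence the inherited labeling $\psi$ on $G = K_1 \sqcup \cdots \sqcup K_n$ is again right-resolving, and $G$ has precisely the claimed shape.

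I expect the genuine obstacle to be the verification that $\psi$ remains surjective after this rerouting. A path in $H$ may loop inside a cycle \emph{before} reaching the exit vertex, whereas the corresponding component forces the duplicated entry arc to be crossed before any looping is possible; one must check that, for a matching number $t$ of loops, the two prescriptions yield the \emph{same} label. Writing $u$ and $w$ for the labels of the two arcs into which the marked vertex splits the cycle, this reduces to the rewriting $u\,(wu)^{t} = (uw)^{t}\,u$, so that the component word and the $H$-word agree. Together with the bookkeeping needed to keep all cycles and paths across the finitely many components vertex-disjoint, this label matching is where the argument must be executed with care; the remaining claims — finiteness of $n$, and continuity and shift-commutation of $\psi$ — are routine.
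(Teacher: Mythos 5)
The paper does not prove this lemma at all --- it is imported verbatim as Lemma~4.8 of \cite{PaSc15} --- so there is no in-paper argument to compare yours against; I can only judge the reconstruction on its own terms, and it is essentially correct, following what is in effect the standard route. Both load-bearing steps are present: (i) countability of $X$ together with the finite-to-one-ness of a right-resolving cover makes the edge shift itself countable, and two distinct first-return loops at a common vertex would embed $\{1,2\}^{\N}$ injectively into the set of forward rays, so every nontrivial strongly connected component is a simple cycle and the condensation is a finite (multi-)DAG; (ii) unfolding that DAG into one component per maximal directed path, built from fresh copies, with the entry-to-exit arc of each cycle absorbed into the incoming connector. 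Your identity $u(wu)^t=(uw)^t u$ is exactly the point where the rerouted presentation is seen to have the same image (it also covers $t=0$ and, in the form $u(wu)^\infty=(uw)^\infty$, the forward-terminal cycle), and right-resolvingness survives because the construction only ever deletes outgoing edges or introduces vertices of out-degree one. The remaining loose ends are genuinely minor: the last cycle of a chain has no exit vertex, so its marked vertex should be declared to be the arrival point of its incoming connector (no duplicated arc needed there); connectors are automatically simple paths through non-cycle vertices, since a repeated vertex would create a nontrivial strongly connected component that is not one of the cycles; and every directed path in the condensation extends to a maximal one, which is what guarantees surjectivity for configurations whose chain of cycles is not itself maximal. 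None of these affects the viability of the argument.
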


\subsection{Counter Machines}
\label{sec:Machines}

In this article, we use several variants of counter machines to implement computation.
We only describe their properties informally.
For $k \in \N$, a deterministic $k$-counter machine $M = (k, Q, \delta, q_0, q_f)$ consists of a finite set $Q$ of internal states and $k$ counters, each of which holds a natural number.
A configuration of the machine is thus an element of $Q \times \N^k$.
Some of the counters may be \emph{input counters}, which means that their values are fixed at the beginning of the computation and cannot be modified; the others are \emph{internal counters}.
The machine is initialized in a state $q_0 \in Q$ with the internal counters set to $0$, and on each step of its computation, it may compare any internal counter for equality to $0$ or an input counter, and based on that information and its internal state, it may increment or decrement one or more internal counters and update its internal state.
The element $\delta$ of the definition encodes the transition function of $M$ in some suitable way.

An \emph{integer $k$-counter machine} is similar to a $k$-counter machine, except that the values of the counters can be negative.
An integer counter machine can have an \emph{oracle counter}, which works as follows.
There is a separate oracle alphabet $\Sigma$, and an oracle configuration $x \in \Sigma^\Z$.
During the computation, the machine can modify the oracle counter normally, and it can query the symbol $x_n$, where $n \in \Z$ is the current value of the oracle counter.

Whan a counter machine of any kind enters the final state $q_f$, it halts.
What this means in the context of a machine simulated by a tiling depends on the application: it may result in a tiling error or produce some special tile.

Both types of counter machines can be simulated by countable SFTs.
We adapt the methods of \cite{SaTo13pub} for this.
Consider first a $k$-counter machine $M = (k, Q, \delta, q_0, q_f)$.
The SFT $X_M$ that simulates $M$ has as its alphabet $S_M = \{\#_L, \#_R\} \cup \left( (\{\mathrm{L}, \mathrm{C}, \mathrm{R}\} \cup H_M) \times \prod_{i=1}^k \{\mathrm{P}_i, \mathrm{Z}_i\} \right)$, where $H_M$ is an auxiliary \emph{head alphabet}.
  The non-$\#$ components of $S_M$ are called the \emph{head track} and \emph{counter tracks}.
  The tracks of each horizontal row of $X_M$ must have the form $\mathrm{L}^m h \mathrm{C}^n \mathrm{R}^k$ for the head track or $\mathrm{P}_i^{m_i} \mathrm{Z}_i^{n_i}$ for the counter tracks for $m, n, k, m_i, n_i \in \N \cup \{\infty\}$ and $h \in H_M$, or a degenerate version of such configuration where some of the regions don't exist.
  The tracks can be followed by regions of by $\#_L$ on the left and $\#_R$ on the right.
  The number $m_i$ represents the value of counter $i$.
  The symbol $h$ is the \emph{zig-zag head}, which contains the internal state of $M$ and some extra information that guides the computation.
  The region containing the tracks is the \emph{computation cone}, and if its length is nonzero, then it grows by one cell to the right on the next row.
  If the computaion cone is empty (meaning that the computation has not sparted yet), then the next row can be equal to the current one, or it can grow by one cell to contain the zig-zag head in the initial state $q_0$.
  The initial values of read-only counters are unrestricted and are `hidden' until the computation cone extends sufficiently far to reveal them, but the internal counters are forced to have value $0$ at the beginning of the computation.
  The zig-zag head travels back and forth in the computation cone at a speed of two cells per row, updating its internal state at the left border where it can see which counters are positive, and updating the internal counter values as it passes over them.
  As the head passes over an internal counter, it also remembers which read-only counters hold the same value.
  The counter machine can introduce tiling errors by entering an internal state that has no outgoing transition in $\delta$.
  It can be shown that the resulting SFT $X_M$ is countable.
  
  See Figure~\ref{fig:CounterMachine} for a sample simulation of a machine with two counters.
  The computation starts with a read-only (lower) counter initialized at value $5$, and the internal (upper) counter at $0$.
  On the first two computation steps, the machine increments the internal counter.
  Note that the machine may compare the interal counter to the input counter already in the first step, even though the value of the latter is hidden; since internal counters are never hidden, they are known to be unequal to any hidden counter.
  
  The simulation of an integer $k$-counter machine is similar, but there are three major differences.
  First, the computation cone grows both to the left and to the right, at the rate of one step per row.
  Second, there is an extra counter that is always initialized at $0$ and never changes its value.
  It denotes the value $0$, and other counters can be compared to it locally.
  Third, there is a separate vertically constant layer over the oracle alphabet $\Sigma$, and when the zig-zag head travels over the oracle counter, it can read the symbol under it on this new layer.
  If this layer is unrestricted, then the resulting SFT will be uncountable, so in our construction we impose additional constraints on it to ensure countability.
  
  \begin{figure}[htp]
  \begin{center}
  \begin{tikzpicture}[scale=0.6]
  
  
    \foreach \y in {-1,...,13}{
      \node at (0.5,\y+0.5) {$\#_L$};
      \node at (1.5,\y+0.5) {$\#_L$};
    }
    


    \foreach \x in {2,...,11}{
      \node at (\x+0.5,-0.5) {$\#_R$};
    }
    
    \foreach \y in {0,...,9}{
      \pgfmathsetmacro{\xmin}{\y+2}
      \foreach \x in {\xmin,...,11}{
        \node at (\x+0.5,\y+0.5) {$\#_R$};
      }
    }
    
    
    \foreach \x/\d [count = \y] in {2/\rightarrow,
                                    2/\leftarrow,
                                    2/\rightarrow,
                                    4/\rightarrow,
                                    6/\leftarrow,
                                    4/\leftarrow,
                                    2/\rightarrow,
                                    4/\rightarrow,
                                    6/\rightarrow,
                                    8/\rightarrow,
                                    10/\rightarrow
                                    }{
      \node at (\x+0.3,\y+0.3) {$q$};
      \node at (\x+0.3,\y+0.6) {${}^\d$};
    }
    
    
    \node at (2.7,1.7) {\small Z};
    
    \foreach \y in {2,...,7}{
      \node at (2.7,\y+0.7) {\small P};
      \node at (3.7,\y+0.7) {\small Z};
    }
    \foreach \y in {8,...,13}{
      \node at (2.7,\y+0.7) {\small P};
      \node at (3.7,\y+0.7) {\small P};
    }
    \foreach \y [count = \c] in {3,...,9}{
      \foreach \x in {1,...,\c}{
        \node at (\x+3.7,\y+0.7) {\small Z};
      }
    }
    \foreach \y in {10,...,13}{
      \foreach \x in {4,...,11}{
        \node at (\x+0.7,\y+0.7) {\small Z};
      }
    }
    
    
    \foreach \y in {1,...,13}{
      \pgfmathsetmacro{\xmax}{min(\y+1,6)}
      \foreach \x in {2,...,\xmax}{
        \node at (\x+0.7,\y+0.3) {\small P};
      }
    }
    \foreach \y in {6,...,13}{
      \pgfmathsetmacro{\xmax}{min(\y+1,11)}
      \foreach \x in {7,...,\xmax}{
        \node at (\x+0.7,\y+0.3) {\small Z};
      }
    }
    
    
%
    
    \draw (0,-1) grid (12,14);
  
  \end{tikzpicture}
  \end{center}
  \caption{A configuration of $X_M$. The position and direction of the zig-zag and two counter tracks are shown, other states are not.}
  \label{fig:CounterMachine}
  \end{figure}

\section{Countable Cover Conditions}

In this section we explore some necessary conditions for possessing a countable SFT cover, and provide examples of shift spaces that do or do not satisfy them.
The following lemma, whose proof is topological, is the main workhorse behind our results.

\begin{lemma}
  \label{lem:UniformlyRecurrent}
  Let $X \subset \Sigma^{\Z^d}$ and $Y \subset \Gamma^{\Z^d}$ be shift spaces, and let $\phi : X \to Y$ be a factor map.
  Let $y \in Y$ be uniformly recurrent in a direction $\vec v \in \Z^d$.
  Then there exists $x \in \phi^{-1}(y)$ that is uniformly recurrent in the direction $\vec v$.
  If $X$ is countable, then $x$ is periodic in the direction $\vec v$.
\end{lemma}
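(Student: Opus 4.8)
The plan is to build the uniformly recurrent preimage $x$ by a compactness argument, exploiting minimality of the orbit closure $\mathcal{O}_{\sigma^{\vec v}}(y)$, and then upgrade uniform recurrence to periodicity in the countable case using a Baire-category / cardinality argument.

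First I would handle the existence of a uniformly recurrent preimage. Since $\phi$ is continuous and surjective and commutes with the shift, $\phi$ induces a factor map between the $\sigma^{\vec v}$-dynamical systems $\mathcal{O}_{\sigma^{\vec v}}(x_0) \to \mathcal{O}_{\sigma^{\vec v}}(y)$ for any $x_0 \in \phi^{-1}(y)$; note $\mathcal{O}_{\sigma^{\vec v}}(y)$ is minimal by hypothesis. Consider the nonempty compact $\sigma^{\vec v}$-invariant set $Z = \phi^{-1}(\mathcal{O}_{\sigma^{\vec v}}(y)) \cap \overline{\mathcal{O}_{\sigma^{\vec v}}(x_0)}$; more cleanly, take $Z = \phi^{-1}(\mathcal{O}_{\sigma^{\vec v}}(y))$, which is a nonempty compact set invariant under $\sigma^{\vec v}$. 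By Zorn's lemma it contains a minimal (under $\sigma^{\vec v}$) nonempty closed invariant subset $Z_0$. Every point $x \in Z_0$ is then uniformly recurrent in the direction $\vec v$, because its $\sigma^{\vec v}$-orbit closure is exactly $Z_0$ (by minimality), hence a minimal system. It remains only to check $\phi(x) = y$: we know $\phi(x) \in \mathcal{O}_{\sigma^{\vec v}}(y)$, and since $\phi(Z_0)$ is a nonempty closed $\sigma^{\vec v}$-invariant subset of the minimal system $\mathcal{O}_{\sigma^{\vec v}}(y)$, we get $\phi(Z_0) = \mathcal{O}_{\sigma^{\vec v}}(y)$. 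This does not immediately give $y \in \phi(Z_0)$ with the right point --- but $\phi(Z_0)$ is all of $\mathcal{O}_{\sigma^{\vec v}}(y)$, so in particular $y \in \phi(Z_0)$, and we pick $x \in Z_0$ with $\phi(x) = y$. Since $x \in Z_0$ and $Z_0$ is $\sigma^{\vec v}$-minimal, $x$ is uniformly recurrent in the direction $\vec v$.

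Now suppose $X$ is countable; I want to show $x$ is $\sigma^{\vec v}$-periodic. The minimal system $Z_0 = \mathcal{O}_{\sigma^{\vec v}}(x)$ is a countable, compact, metrizable dynamical system under the single homeomorphism $\sigma^{\vec v}$. A countable compact metric space has an isolated point (otherwise it would be a nonempty perfect set, hence uncountable by the Baire category theorem). By minimality, the orbit of an isolated point is dense and consists of isolated points; if an isolated point had infinite orbit, that orbit would be a countably infinite discrete subset with a limit point in the compact space $Z_0$, and that limit point would lie in the orbit closure $Z_0$ but would fail to be isolated, contradicting that \emph{every} point of the minimal system $Z_0$ is isolated (isolatedness is a $\sigma^{\vec v}$-invariant property, and the isolated points form a nonempty closed invariant set, hence all of $Z_0$). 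Therefore the orbit of $x$ under $\sigma^{\vec v}$ is finite, i.e. $\sigma^{k\vec v}(x) = x$ for some $k \geq 1$; but a single shift map $\sigma^{\vec v}$ acting on a shift space has no finite orbits of size $> 1$ unless... — more carefully, $\sigma^{k \vec v}(x) = x$ already says $k\vec v$ is a period of $x$, which suffices if the statement means "periodic in the direction $\vec v$" in the sense of some nonzero multiple; if literal $\sigma^{\vec v}(x) = x$ is required, note that a finite $\sigma^{\vec v}$-orbit $\{x, \sigma^{\vec v}(x), \ldots, \sigma^{(k-1)\vec v}(x)\}$ in a minimal system forces $k=1$ is false in general, so I would state the conclusion as $\vec v$ being a period up to a positive multiple, consistent with the paper's definition of "periodic in a direction."

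The main obstacle is the second part: extracting periodicity from countability. The key insight is that a nonempty perfect Polish space is uncountable (Baire category), so a countable minimal system must have an isolated point, and minimality then propagates isolatedness to every point, forcing a finite orbit. One technical point to be careful about is that the relevant topology on $Z_0$ must be the subspace topology from $\Sigma^{\Z^d}$ (which is compact metrizable), and that we are using $\sigma^{\vec v}$ as a \emph{single} homeomorphism — the minimality is with respect to this $\Z$-action, not the full $\Z^d$-action — so all the dynamical facts (Zorn's lemma for minimal subsets, propagation of invariant properties) are applied in the category of $\Z$-systems, which is exactly what the definitions in the excerpt set up.
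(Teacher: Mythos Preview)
Your argument is correct and follows the same route as the paper: pass to a $\sigma^{\vec v}$-minimal subset $Z_0$ of $\phi^{-1}(\mathcal{O}_{\sigma^{\vec v}}(y))$ via Zorn's lemma, use minimality of $\mathcal{O}_{\sigma^{\vec v}}(y)$ to get $\phi(Z_0) = \mathcal{O}_{\sigma^{\vec v}}(y)$, and in the countable case conclude $Z_0$ is finite (the paper simply cites the fact that infinite minimal systems are uncountable, whereas you spell out the Baire/perfect-set argument). One small slip: the set of isolated points of $Z_0$ is \emph{open}, not closed, so the minimality step should be applied to its closed invariant complement, which is then forced to be empty.
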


\begin{proof}
  Denote $Z = \phi^{-1}(\mathcal{O}_{\sigma^{\vec v}}(y)) \subset X$, which is a compact set.
  We have $\sigma^{\vec v}(Z) = Z$, and hence $(Z, \sigma^{\vec v})$ is a topological dynamical system.
  Let $U \subset Z$ be a minimal subsystem of $Z$, which exists by a simple application of Zorn's lemma.
  Then any $z \in U$ is uniformly recurrent in the direction $\vec v$.
  Since $\phi(U)$ is a subsystem of the orbit closure $(\mathcal{O}_{\sigma^{\vec v}}(y), \sigma^{\vec v})$, which is minimal, we have $\phi(U) = \mathcal{O}_{\sigma^{\vec v}}(y)$ and hence $y \in \phi(U)$.
  Any element of $U \cap \phi^{-1}(y)$ can be chosen as $x$.
  
  If the shift space $X$ is countable, then so is $U$.
  Since an infinite minimal dynamical system is uncountable, $U$ must be finite.
  Then each $x \in U$ satisfies $\sigma^{k \vec v}(x) = x$ for some $k \geq 1$.
\end{proof}

\begin{lemma}
  \label{lem:UltimatelyPeriodic}
  Let $X \subset \Sigma^{\Z^d}$ be a countable SFT and $Y \subset \Gamma^{\Z^d}$ a vertically constant shift space, and let $\phi : X \to Y$ be a factor map.
  Then each $y \in Y$ has a $\phi$-preimage $x \in X$ which is vertically periodic, and each horizontal row of $x$ is ultimately periodic in both directions.
\end{lemma}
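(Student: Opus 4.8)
The plan is to apply Lemma~\ref{lem:UniformlyRecurrent} twice: once to extract a vertically periodic preimage, and a second time — now working one-dimensionally with the horizontal shift — to upgrade the horizontal rows to ultimate periodicity. Since $Y$ is vertically constant, every configuration $y \in Y$ is fixed by $\sigma^{\vec e_2}, \ldots, \sigma^{\vec e_d}$, hence in particular $y$ is uniformly recurrent in the direction $\vec e_2$ (a periodic point is uniformly recurrent). Applying Lemma~\ref{lem:UniformlyRecurrent} to $\phi : X \to Y$ with $\vec v = \vec e_2$ and using that $X$, being a countable SFT, is countable, we obtain $x \in \phi^{-1}(y)$ with $\sigma^{k \vec e_2}(x) = x$ for some $k \geq 1$; iterating over the remaining vertical directions $\vec e_3, \ldots, \vec e_d$ (each time restricting to the preimage of $\mathcal{O}_{\sigma^{\vec e_i}}(y) = \{y\}$, which keeps us inside a countable SFT and keeps the already-obtained periods) yields a single $x$ that is vertically periodic. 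This handles the first half of the statement.

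For the second half, fix such a vertically periodic $x$ and consider its central horizontal row $r = P(x) \in \Sigma^{\Z}$, together with the one-dimensional sofic picture obtained by restricting the vertical period structure of $x$. The key point is that the vertically periodic configuration $x$ is determined by finitely many of its horizontal rows, all of which are shifts of each other under the vertical periods; so it suffices to make one row ultimately periodic in both directions, and by symmetry (replacing $x$ by its mirror image) it suffices to treat ultimate periodicity to the right. I would now run the same compactness argument in the one-dimensional setting: the orbit closure $\mathcal{O}_{\sigma^{\vec e_1}}(x)$, intersected with $X$, is a compact $\sigma^{\vec e_1}$-invariant countable set, so it contains a finite $\sigma^{\vec e_1}$-orbit, i.e. a configuration $x'$ that is $\sigma^{m \vec e_1}$-periodic for some $m \geq 1$. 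But $x'$ need not project to $y$ — the issue is that the horizontal orbit closure of $x$ is generally not minimal, so Lemma~\ref{lem:UniformlyRecurrent} as stated does not directly give a horizontally periodic preimage of $y$.

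The fix, and the main obstacle, is to localize: one does not want a \emph{globally} horizontally periodic preimage, only one whose \emph{row} is ultimately periodic on each side. The right statement is a ``one-sided'' version of the minimality argument. Concretely, look at the set $L$ of all configurations of the form $\lim_{i} \sigma^{-n_i \vec e_1}(x)$ with $n_i \to \infty$ (the ``left limit set''); it is a nonempty compact $\sigma^{\vec e_1}$-invariant subset of the countable SFT $X$, hence contains a finite orbit, giving a configuration $x_L$ that is $\sigma^{p \vec e_1}$-periodic. Since $X$ is an SFT with some memory $M$, and $x_L$ is a limit of left-translates of $x$, the row of $x_L$ agrees with the row of $x$ on arbitrarily long windows far to the left; gluing $x$ to $x_L$ along such a window — legal because $X$ is an SFT and the patterns match on a window longer than $M$ — produces a new preimage of $y$ (the glued configuration still maps to $y$ because away from the glue region it agrees with either $x$ or with a translate that also maps to $y$, using vertical constancy of $Y$ to handle the translated part) whose row is eventually $p$-periodic to the left. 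Repeating the symmetric construction on the right, and then re-invoking the vertical argument from the first paragraph to restore vertical periodicity, yields the desired $x$. I expect the bookkeeping in the gluing step — verifying that the spliced configuration genuinely lies in $X$ and still maps onto $y$, and that the splices on the two sides and the vertical period can be made simultaneously — to be the technically delicate part, though each ingredient is routine given that $X$ is an SFT and $Y$ is vertically constant.
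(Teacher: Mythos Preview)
Your first paragraph is fine and matches the paper: vertical periodicity of a preimage comes directly from Lemma~\ref{lem:UniformlyRecurrent} (iterated over $\vec e_2,\dots,\vec e_d$). The problem is entirely in the second half.

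The gluing step does \emph{not} produce a $\phi$-preimage of $y$. To the left of the glue point your configuration $x'$ agrees with a horizontal translate of $x_L$, so $\phi(x')$ agrees there with a horizontal translate of $y_L := \phi(x_L) = \lim_j \sigma^{-n_j\vec e_1}(y)$. There is no reason for this periodic configuration $y_L$ (shifted back) to coincide with $y$ on the entire left half-plane; all you know is that they agree on the single window of width $\sim 2M$ around $-n_i\vec e_1$ that you used for the splice. Vertical constancy of $Y$ is irrelevant here, since the discrepancy is horizontal. Concretely, if the row of $y$ were something like $\cdots 1\,0^3\,1\,0^2\,1\,0\,1 . 0^\infty$, the left limit $y_L$ would be the all-$0$ configuration, and your glued $x'$ would map to a configuration that is all $0$'s far to the left --- not to $y$. (Of course the lemma says this $y$ cannot arise, but that is precisely what you are trying to prove, so you cannot assume it.)

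The paper avoids this entirely by observing that no modification of $x$ is needed. Once $x$ is vertically periodic with periods $p_2,\dots,p_d$, it lies in the sub-SFT $Z_{\vec p}\subset X$ of configurations with those vertical periods. Collapsing a fundamental domain $\{0\}\times\prod_i\{0,\dots,p_i-1\}$ to a single symbol exhibits $Z_{\vec p}$ as (conjugate to) a one-dimensional countable SFT. Now invoke the structure theorem for one-dimensional countable sofic shifts (Lemma~\ref{lem:CountableStructure}): the underlying graph is a chain of disjoint cycles joined by paths, so \emph{every} bi-infinite walk --- in particular your $x$ --- is automatically ultimately periodic in both directions. The SFT hypothesis is used here, not for gluing, but to reduce to a finite graph with no branching cycles.
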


In particular, each horizontal row of each $y \in Y$ is ultimately periodic to the left and right.

\begin{proof}
  Let $p_2, \ldots, p_d \geq 1$ and $\vec p = (p_2, \ldots, p_d) \in \Z^{d-1}$, and consider the subset $Z_{\vec p} \subset X$ of configurations that are $p_i \vec e_i$-periodic for all $2 \leq i \leq d$.
  We can define $Z_{\vec p}$ by adding a finite number of forbidden patterns to those that define $X$, so it is an SFT.
  Denote $P = \{0\} \times \prod_{i=2}^d \{0, \ldots, p_i-1\} \subset \Z^d$.
  Define a map $\pi_{\vec p} : Z_{\vec p} \to (\Sigma^P)^\Z$ by $\pi_{\vec p}(z)_n = \sigma^n(z)|_P$ for each $n \in \N$.
  Then $\pi_{\vec p}$ is injective on $Z_{\vec p}$, and the image $\pi_{\vec p}(Z_{\vec p})$ is a one-dimensional shift space whose symbols are blocks of shape $P$ drawn from $X$.
  Since $X$ is countable, so is $\pi_{\vec p}(Z_{\vec p})$, and it is easily seen to be an SFT as well.
  Lemma~\ref{lem:CountableStructure} implies that each configuration of the countable SFT $\pi_{\vec p}(Z_{\vec p})$, and hence each row of $Z_{\vec p}$, is ultimately periodic to the left and right.
  
  Let $x \in X$ be a vertically periodic $\phi$-preimage of $y$, given by Lemma~\ref{lem:UniformlyRecurrent}, and let $\vec p \in \Z^{d-1}$ be the vector of its periods in the directions $\vec e_i$ as above.
  Then $x \in Z_{\vec p}$, and hence it is ultimately periodic to the left and right, and so is $y$.
\end{proof}

\begin{lemma}
  \label{lem:ComputabilityCondition}
  Let $Y \subset \Gamma^{\Z^d}$ be a countably covered vertically constant sofic shift.
  Given three nonempty words $u, v, w \in \Gamma^+$, it is decidable whether ${}^\infty u v w^\infty \in P(Y)$.
\end{lemma}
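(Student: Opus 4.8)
The plan is to reduce the problem to deciding membership $\bar r \in Y$, where $r = {}^\infty u v w^\infty$ and $\bar r \in \Gamma^{\Z^d}$ is the vertically constant configuration with central row $r$. Since $Y$ is vertically constant we have $r \in P(Y)$ if and only if $\bar r \in Y$. I would then exhibit two semi-decision procedures, one that halts exactly when $\bar r \notin Y$ and one that halts exactly when $\bar r \in Y$; running them in parallel decides the question, since exactly one of the two alternatives holds. Throughout, the fixed data of the problem --- a countable SFT cover $X$ with finite forbidden set $F$ and a block code $\phi : X \to Y$ with neighborhood $N$ --- may be hard-coded into the algorithm, and $u, v, w$ are the input.

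For the negative side I would use only soficity. A sofic shift is effectively closed, and more precisely the set of finite patterns that do not occur in $Y$ is computably enumerable: a pattern $q$ fails to occur if and only if for some box size $n$ there is no $F$-admissible $\Sigma$-pattern on the $n$-box projecting to $q$ under $\phi$, and for each $n$ this is a finite check. The configuration $\bar r$ is computable, so its finite subpatterns can be enumerated, and $\bar r \notin Y$ holds if and only if one of them does not occur in $Y$. Dovetailing the enumeration of subpatterns of $\bar r$ against the enumeration of non-occurring patterns therefore semi-decides $\bar r \notin Y$.

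For the positive side I would use countability through Lemma~\ref{lem:UltimatelyPeriodic}. If $r \in P(Y)$, then $\bar r$ is uniformly recurrent in each vertical direction, so by Lemma~\ref{lem:UniformlyRecurrent} it has a vertically periodic preimage $x \in X$, say with period vector $\vec p$. Collapsing the $\vec p$-periodic columns of $x$ via the map $\pi_{\vec p}$ from the proof of Lemma~\ref{lem:UltimatelyPeriodic} yields a point of a countable one-dimensional SFT, which by Lemma~\ref{lem:CountableStructure} is ultimately periodic in both directions. Hence $x$ itself admits a finite description: a vertical period $\vec p$ together with finite column-words $\alpha, \beta, \gamma$ such that the column sequence of $x$ equals ${}^\infty \alpha \beta \gamma^\infty$. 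I would then enumerate all such finite descriptions and, for each, check (i) that the described configuration is $F$-admissible --- decidable, since a vertically periodic and horizontally bi-ultimately periodic configuration contains only finitely many, effectively listable, windows --- and (ii) that the central row of its $\phi$-image equals $r$, which is an equality test between two bi-ultimately periodic words and hence decidable. The procedure halts as soon as some description passes both checks, which happens precisely when $r \in P(Y)$.

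The main obstacle is the positive side, and specifically the guarantee that membership $r \in P(Y)$ is always witnessed by a globally, uniformly periodic preimage admitting a finite description. This is exactly what is extracted by combining Lemma~\ref{lem:UltimatelyPeriodic} with the column-collapse argument: it is not enough that each horizontal row of the preimage be ultimately periodic, since the onset and period could a priori vary from row to row; the point is that the collapsed column sequence is ultimately periodic in both directions, so a single pair of periods and one transition block describe the whole configuration. Once this finiteness is in hand, both checks (i) and (ii) are routine, and it is worth noting that the negative side needs only soficity, while countability is used solely to bound the complexity of positive witnesses.
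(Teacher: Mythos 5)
Your proposal is correct and follows essentially the same route as the paper: reduce to membership of the vertically constant configuration in $Y$, semi-decide the negative case by compactness over locally valid patterns of the countable SFT cover, and semi-decide the positive case by enumerating finitely described vertically periodic, horizontally bi-ultimately periodic preimages guaranteed by Lemma~\ref{lem:UltimatelyPeriodic}. Your explicit justification via the column-collapse map $\pi_{\vec p}$ that the positive witness admits a \emph{single uniform} finite description is a detail the paper leaves implicit, but it is the same argument.
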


\begin{proof}
  Let $X \subset \Sigma^{\Z^d}$ be a countable SFT and $\phi : X \to Y$ a factor map.
  Let $y \in \Gamma^{\Z^d}$ be the configuration whose every horizontal row is equal to ${}^\infty u v w^\infty$.
  We have ${}^\infty u v w^\infty \in P(Y)$ if and only if $y \in Y$.
  
  Let $r \in \N$ be the radius of $\phi$.
  If $y \notin Y$, then by compactness there exists $m \in \N$ such that there exists no $(2 m + 2r + 1)^d$-shaped pattern $Q$ over $\Sigma$ which is locally valid with respect to the rules of $X$, and satisfies $\phi(Q) = y|_{[-m, m]^d}$.
  This $m$ can be found algorithmically by enumerating locally valid patterns.
  
  Suppose then that $y \in Y$.
  By Lemma~\ref{lem:UltimatelyPeriodic}, there exists $x \in \phi^{-1}(y)$ which is vertically periodic, and whose rows are ultimately periodic to the left and right.
  Such an $x$ can be found algorithmically by enumerating all possible vertical periods, eventual horizontal periods and transient parts, and verifying whether $x \in X$ and $\phi(x) = y$.
\end{proof}

Lemma~\ref{lem:UltimatelyPeriodic} and Lemma~\ref{lem:ComputabilityCondition} give two properties that all vertically constant countably covered sofic shifts satisfy, one of which is geometric and the other computational in nature.
All sofic shifts are effectively closed, which gives another computational necessary condition.
We formalize these results in a definition.

\begin{definition}
  Let $Y \subset \Gamma^\Z$ be a one-dimensional shift space.
  We say that $Y$ satisfies the \emph{countable cover conditions}, if
  \begin{enumerate}
    \item $Y$ is effectively closed,
    \item every $y \in Y$ is ultimately periodic to the left and right, and
    \item given $u, v, w \in \Gamma^+$, it is decidable whether ${}^\infty u v w^\infty \in Y$.
  \end{enumerate}
  A $d$-dimensional shift space $Y \subset \Gamma^{\Z^d}$ satisfies the countable cover conditions if it is vertically constant and $P(Y)$ satisfies the one-dimensional conditions.
\end{definition}

\begin{corollary}
  \label{cor:Main}
  For $d \geq 2$, every countably covered vertically constant $\Z^d$-sofic shift satisfies the countable cover conditions.
\end{corollary}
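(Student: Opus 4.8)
The plan is to assemble Corollary~\ref{cor:Main} directly from the three pieces already proved, checking that the hypothesis ``countably covered vertically constant $\Z^d$-sofic shift'' feeds each of them correctly. First I would fix $Y \subset \Gamma^{\Z^d}$ a sofic shift that is vertically constant and possesses a countable SFT cover $\phi : X \to Y$ with $X$ a countable SFT. Since $Y$ is vertically constant, the one-dimensional slice $P(Y) \subset \Gamma^\Z$ is defined, and by definition the $d$-dimensional countable cover conditions for $Y$ reduce to the one-dimensional conditions for $P(Y)$; so it suffices to verify the three numbered clauses for $P(Y)$.

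For clause (1), I would note that every sofic shift is effectively closed: the complement of its language is computably enumerable by searching for locally valid patterns over the cover alphabet that fail to project correctly (this is the same enumeration used in the proof of Lemma~\ref{lem:ComputabilityCondition}). Hence $Y$ is effectively closed, and since $P(Y)$ is obtained from $Y$ by a computable projection to a subgroup, $P(Y)$ is effectively closed as well. For clause (2), Lemma~\ref{lem:UltimatelyPeriodic} applies verbatim to the pair $(X, Y, \phi)$: each $y \in Y$ has a vertically periodic preimage whose horizontal rows are ultimately periodic in both directions, and since $\phi$ is a block code this ultimate periodicity passes to the rows of $y$ itself; restricting to the central row shows every element of $P(Y)$ is ultimately periodic to the left and right. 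For clause (3), Lemma~\ref{lem:ComputabilityCondition} states exactly that given nonempty $u, v, w \in \Gamma^+$ it is decidable whether ${}^\infty u v w^\infty \in P(Y)$.

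Putting these together gives that $P(Y)$ satisfies the one-dimensional countable cover conditions, hence $Y$ satisfies the $d$-dimensional ones, completing the proof. There is essentially no obstacle here: the corollary is a bookkeeping consolidation of Lemmas~\ref{lem:UltimatelyPeriodic} and~\ref{lem:ComputabilityCondition} together with the standard fact that sofic shifts are effectively closed. The only point deserving a sentence of care is the reduction from the $d$-dimensional to the one-dimensional conditions, i.e.\ confirming that vertical constancy of $Y$ is what licenses replacing $Y$ by $P(Y)$ throughout, and that effective closedness is preserved under the projection $P_1$; both are immediate from the definitions given in Section~2.
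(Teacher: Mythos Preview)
Your proposal is correct and matches the paper's approach exactly: the paper presents Corollary~\ref{cor:Main} without a separate proof, treating it as an immediate consolidation of Lemma~\ref{lem:UltimatelyPeriodic}, Lemma~\ref{lem:ComputabilityCondition}, and the standard fact that sofic shifts are effectively closed. Your write-up simply makes explicit the routine verification that the paper leaves to the reader.
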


Since the last two conditions allow us to enumerate the configurations of the shift space, we can also enumerate its language.

\begin{proposition}
  Every shift space that satisfies the countable cover conditions has a computable language.
\end{proposition}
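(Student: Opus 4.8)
The plan is to show that the language $L(Y) \subseteq \Gamma^*$ is both computably enumerable and co-computably enumerable, hence decidable. Co-enumerability is immediate from the first condition: since $Y$ is effectively closed, the set of finite words that do \emph{not} occur in $Y$ --- that is, the complement of $L(Y)$ --- is computably enumerable. So the real work is to enumerate $L(Y)$ itself, and I would do this first in the one-dimensional case.

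Fix a word $s \in \Gamma^*$. Then $s \in L(Y)$ if and only if $s$ occurs in some configuration $y \in Y$, and by the second condition every such $y$ is ultimately periodic in both directions, i.e.\ $y = {}^\infty u\, \alpha\, v^\infty$ for some $u, v \in \Gamma^+$ and $\alpha \in \Gamma^*$. Folding $\alpha$ into the middle block (and noting ${}^\infty u v^\infty = {}^\infty u\, u\, v^\infty$ to cover the case $\alpha = \varepsilon$), this shows that $s \in L(Y)$ if and only if there exist $u, v, w \in \Gamma^+$ with ${}^\infty u v w^\infty \in Y$ and $s$ a factor of ${}^\infty u v w^\infty$. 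The first of these is decidable by the third of the countable cover conditions, and the second is decidable as well, since $s$ occurs in ${}^\infty u v w^\infty$ if and only if it occurs in the finite word $u^{|s|+1} v w^{|s|+1}$. Dovetailing over all triples $(u, v, w) \in (\Gamma^+)^3$ therefore enumerates exactly $L(Y)$, and running this enumeration in parallel with the one for the complement yields a decision procedure.

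For $d \geq 2$, the definition requires $Y$ to be vertically constant with $P(Y)$ satisfying the one-dimensional conditions, so $Y = P_1^\dagger(P(Y))$ and it suffices to decide occurrence of box patterns. A box pattern $Q : [0,n)^d \to \Gamma$ occurs in $Y$ if and only if $Q$ is vertically constant and its restriction to $[0,n) \times \{0\}^{d-1}$, read as a length-$n$ word, lies in $L(P(Y))$; vertical constancy is trivially checkable, and $L(P(Y))$ is computable by the one-dimensional case, so $L(Y)$ is computable.

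I do not expect any step to be a genuine obstacle: the argument is essentially bookkeeping that combines the three conditions. The one point that must be handled with a little care is the reduction in the second paragraph --- that an occurrence of $s$ anywhere in $Y$ can always be witnessed by an eventually biperiodic configuration of the restricted form ${}^\infty u v w^\infty$ --- and this is precisely where condition~(2) is used.
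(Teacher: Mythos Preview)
Your proof is correct and follows essentially the same approach as the paper, which gives only a one-sentence justification (``Since the last two conditions allow us to enumerate the configurations of the shift space, we can also enumerate its language''). You have fleshed this out properly, including the explicit use of condition~(1) for co-enumerability and the handling of the $d \geq 2$ case, neither of which the paper spells out.
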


We now give examples of vertically constant sofic shifts that do not satisfy the countable cover conditions, showing that they are not countably covered.

\begin{example}
\label{ex:First}
  Let $\Gamma = \{0, 1\}$, and consider the one-dimensional shift space $Z \subset \Gamma^\Z$ defined by the forbidden patterns $0^m 1 0^m 1$ for all $m > 0$, as well as $1 0^n 1 1$ and $1 0^n 1 0^{n + 2}$ for all $n \geq 0$.
  It is the orbit closure of the configuration $z = {}^\infty 0 . 1 1 0 1 0 0 1 0 0 0 1 \cdots$, where the number of $0$s between consecutive $1$s always increases by one.
  Since $Z$ is effectively closed, Lemma~\ref{lem:BlackBox} implies that the $d$-dimensional vertically constant shift space $P^\dag(Z) \subset \Gamma^{\Z^d}$ is sofic for each $d \geq 2$.
  It is also countable, since $Z$ consists of the orbit of $z$, the orbit of ${}^\infty 0 1 0^\infty$, and the all-$0$ configuration.
  However, since the configuration $z \in Z$ is not ultimately periodic to the right, the shift space $Z$ does not satisfy the second countable cover condition, and Corollary~\ref{cor:Main} implies that $P^\dag(Z)$ is not countably covered.
\end{example}

\begin{example}
  Let $\Gamma = \{0, 1, 2, 3\}$, and consider the one-dimensional shift space $Z \subset \Gamma^\Z$ defined by the forbidden patterns $a b$ for $a > b \in \Gamma$, the pattern $0 1^m 2^n 3$ for each $m, n \in \N$ such that the $m$th Turing machine $T_m$ does not halt after exactly $n$ steps on empty input, as well as $0 1^m 2^{n+1}$ for each $m, n \in \N$ such that $T_m$ halts after $n$ steps.
  Then, for each $m \in \N$, the shift space $Z$ contains the configuration ${}^\infty 0 1^m 2^n 3^\infty$ if $T_m$ halts after exactly $n$ steps, and ${}^\infty 0 1^m 2^\infty$ if $T_m$ never halts.
  These are the only configurations of $Z$ that contain both $0$ and $2$.
  
  For $d \geq 2$, the $d$-dimensional vertically constant shift space $P^\dag(Z) \subset \Gamma^{\Z^d}$ is again sofic by Lemma~\ref{lem:BlackBox}.
  Each $y \in P^\dag(Z)$ is ultimately periodic to the left and right.
  However, given $m \in \N$ it is undecidable whether ${}^\infty 0 1^m 2^\infty \in Z$, since this is equivalent to the complement of the halting problem.
  Thus $Z$ does not satisfy the third countable cover condition, and by Corollary~\ref{cor:Main}, $P^\dag(Z)$ is not countably covered.
  It can also be shown that the language of $Z$ is computable.
  This example shows that the first two conditions, together with a computable language, do not imply the third.
\end{example}

With a similar construction, we also show that the last two conditions do not imply the first one.
Of course, since this example is not effectively closed, the corresponding vertically periodic shift space is not sofic.

\begin{example}
  \label{ex:Pi01Needed}
  Let $\Gamma = \{0, 1, 2, 3\}$, and consider the one-dimensional shift space $Z \subset \Gamma^\Z$ defined by the forbidden patterns $a b$ for $a > b \in \Gamma$, the pattern $0 1^m 2^{m+n} 3$ for each $m, n \in \N$ such that $T_m$ does not halt after $n$ steps, $0 1^m 2^{m+n+1}$ for each $m, n \in \N$ such that $T_m$ halts in exactly $n$ steps, as well as $0 1^m 2$ for each $m \in \N$ such that $T_m$ never halts.
  Then $Z$ consists of the configurations ${}^\infty 0 1^m 2^{m+n} 3^\infty$ where $T_m$ halts after $n$ steps, as well as those that do not contain both $0$ and $2$.
  The second countable cover condition is immediate, and the third is also easy to see.
  Since the pattern $0 1^m 2$ occurs in $Z$ if and only if $T_m$ eventually halts, the complement of the language is not computably enumerable, and hence $Z$ is not effectively closed.
\end{example}

A modified version of Example~\ref{ex:First} shows that vertical periodicity is essential for our proof technique.

\begin{example}
\label{ex:HasCover}
  Let $\Gamma = \{0,1\}$, and let $Z \subset \Gamma^\Z$ be as in Example~\ref{ex:First}.
  Consider the two-dimensional configuration $y \in \Gamma^{\Z^2}$ whose central horizontal row is $z = {}^\infty 0 . 1 1 0 1 0 0 1 0 0 0 1 \cdots$ and all other rows are filled with $0$s, and let $Y \subset \Gamma^{\Z^2}$ be the orbit closure of $y$.
  It consists of the orbit of $y$ and all configurations that contain at most one $1$.
  
  We claim that $Y$ is a countably covered sofic shift.
  For this, let $\Sigma$ be the tile set shown in Figure~\ref{fig:BoxAlph}(a), and let $X \subset \Sigma^{\Z^2}$ be the SFT consisting of those tilings where the contents of adjacent tiles match.
  The white regions with the symbols $\mathrm{N}$ and $\mathrm{S}$ are deemed to have distinct colors.
  This SFT is countable: if a configuration does not contain any parallel lines, then it consists of finitely many infinite regions separated by straight lines, and if it does contain parallel lines, then it contains at least one finite square, and thus an infinite sequence of growing squares, and is a translated version of the configuration in Figure~\ref{fig:BoxAlph}(b).
  
  Let $\pi : X \to \Gamma^{\Z^2}$ be the block code that sends each tile with a black dot to $1$, and the others to $0$.
  It is easy to see that $\pi(X) = Y$, and hence $Y$ is a countably covered sofic shift.
\end{example}

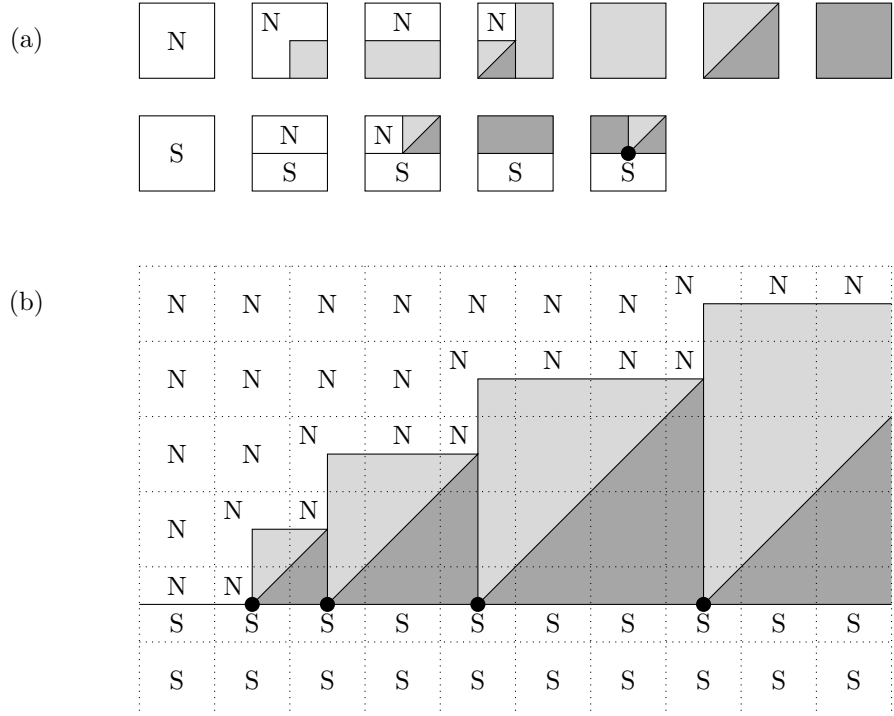
\begin{figure}[ht]
  \begin{center}
  \begin{tikzpicture}
  
  \node at (-1.5,0.5) {(a)};
  \node at (-1.5,-3) {(b)};
  
  \begin{scope}[shift={(0,0)}]
    \node at (0.5,0.5) {N};
    \draw (0,0) rectangle (1,1);
  \end{scope}
  
  \begin{scope}[shift={(1.5,0)}]
    \node at (0.25,0.75) {N};
    \fill [black!15] (0.5,0.5) rectangle (1,0);
    \draw (0.5,0) -- (0.5,0.5) -- (1,0.5);
    \draw (0,0) rectangle (1,1);
  \end{scope}
  
  \begin{scope}[shift={(3,0)}]
    \node at (0.5,0.75) {N};
    \fill [black!15] (0,0) rectangle (1,0.5);
    \draw (0,0.5) -- (1,0.5);
    \draw (0,0) rectangle (1,1);
  \end{scope}
  
  \begin{scope}[shift={(4.5,0)}]
    \node at (0.25,0.75) {N};
    \fill [black!15] (0.5,0) rectangle (1,1);
    \fill [black!15] (0,0) -- (0,0.5) -- (0.5,0.5);
    \fill [black!35] (0,0) -- (0.5,0) -- (0.5,0.5);
    \draw (0.5,0) -- (0.5,1);
    \draw (0,0.5) -- (0.5,0.5) -- (0,0);
    \draw (0,0) rectangle (1,1);
  \end{scope}
  
  \begin{scope}[shift={(6,0)}]
    \fill [black!15] (0,0) rectangle (1,1);
    \draw (0,0) rectangle (1,1);
  \end{scope}
  
  \begin{scope}[shift={(7.5,0)}]
    \fill [black!15] (0,0) -- (1,1) -- (0,1);
    \fill [black!35] (0,0) -- (1,1) -- (1,0);
    \draw (0,0) -- (1,1);
    \draw (0,0) rectangle (1,1);
  \end{scope}
  
  \begin{scope}[shift={(9,0)}]
    \fill [black!35] (0,0) rectangle (1,1);
    \draw (0,0) rectangle (1,1);
  \end{scope}
  
  \begin{scope}[shift={(0,-1.5)}]
    \node at (0.5,0.5) {S};
    \draw (0,0) rectangle (1,1);
  \end{scope}
  
  \begin{scope}[shift={(1.5,-1.5)}]
    \node at (0.5,0.25) {S};
    \node at (0.5,0.75) {N};
    \draw (0,0.5) -- (1,0.5);
    \draw (0,0) rectangle (1,1);
  \end{scope}
  
  \begin{scope}[shift={(3,-1.5)}]
    \node at (0.25,0.75) {N};
    \node at (0.5,0.25) {S};
    \fill [black!15] (0.5,0.5) -- (0.5,1) -- (1,1);
    \fill [black!35] (0.5,0.5) -- (1,0.5) -- (1,1);
    \draw (0,0.5) -- (0.5,0.5) -- (1,1);
    \draw (0.5,1) -- (0.5,0.5) -- (1,0.5);
    \draw (0,0) rectangle (1,1);
  \end{scope}
  
  \begin{scope}[shift={(4.5,-1.5)}]
    \node at (0.5,0.25) {S};
    \fill [black!35] (0,0.5) rectangle (1,1);
    \draw (0,0.5) -- (1,0.5);
    \draw (0,0) rectangle (1,1);
  \end{scope}
  
  \begin{scope}[shift={(6,-1.5)}]
    \node at (0.5,0.25) {S};
    \fill [black!35] (0,0.5) rectangle (0.5,1);
    \fill [black!35] (0.5,0.5) -- (1,1) -- (1,0.5);
    \fill [black!15] (0.5,0.5) -- (1,1) -- (0.5,1);
    \draw (0,0.5) -- (1,0.5);
    \draw (0.5,1) -- (0.5,0.5) -- (1,1);
    \fill (0.5,0.5) circle (0.1cm);
    \draw (0,0) rectangle (1,1);
  \end{scope}

  \begin{scope}[yshift=-8.5cm]
  
    \fill [black!15] (1.5,1.5) -- ++(0,1) -- ++(1,0);
    \fill [black!15] (2.5,1.5) -- ++(0,2) -- ++(2,0);
    \fill [black!15] (4.5,1.5) -- ++(0,3) -- ++(3,0);
    \fill [black!15] (7.5,1.5) -- ++(0,4) -- ++(2.5,0) -- ++(0,-1.5);
    
    \fill [black!35] (1.5,1.5) -- ++(1,0) -- ++(0,1);
    \fill [black!35] (2.5,1.5) -- ++(2,0) -- ++(0,2);
    \fill [black!35] (4.5,1.5) -- ++(3,0) -- ++(0,3);
    \fill [black!35] (7.5,1.5) -- ++(2.5,0) -- ++(0,2.5);
    
    \draw (10,1.5) -- (1.5,1.5) -- ++(0,1) -- ++(1,0) -- ++(0,1) -- ++(2,0) -- ++(0,1) -- ++(3,0) -- ++(0,1) -- ++(2.5,0);
    \draw (1.5,1.5) -- ++(1,1) -- ++(0,-1) -- ++(2,2) -- ++(0,-2) -- ++(3,3) -- ++(0,-3) -- ++(2.5,2.5);
    \draw (0,1.5) -- (1.5,1.5);
    
    \foreach \x in {1,2,4,7}{
      \fill (\x+0.5,1.5) circle (0.1cm);
    }
    
    \foreach \x in {0,...,9}{
    	\node at (\x+0.5,0.5) {S};
    	\node at (\x+0.5,1.25) {S};
    }
    \foreach \x/\y in {0/2,0/3,0/4,0/5,1/3,1/4,1/5,2/4,2/5,3/4,3/5,4/5,5/5,6/5}{
      \node at (\x+0.5,\y+0.5) {N};
    }
    \foreach \x/\y in {0/1,3/3,5/4,6/4,8/5,9/5}{
      \node at (\x+0.5,\y+0.75) {N};
    }
    \foreach \x/\y in {1/1,1/2,2/2,2/3,4/3,4/4,7/4,7/5}{
      \node at (\x+0.25,\y+0.75) {N};
    }
    
    \draw [dotted] (0,0) grid (10,6);
    
  \end{scope}
  
  \end{tikzpicture}
  \end{center}
  \caption{(a) The tile set of Example~\ref{ex:HasCover}, and (b) an example configuration. Dotted lines denote tile borders.}
  \label{fig:BoxAlph}
\end{figure}

We show that the converse of Corollary~\ref{cor:Main} holds in a particular subclass of shift spaces, namely, those whose projective subdynamics is contained in some one-dimensional countable sofic shift.
In \cite{SaTo13pub}, such systems were said to have the \emph{bounded signal property}.
Before this, we give a few examples of two-dimensional countably covered sofic shifts that do not belong to this class, both to show that the result is not optimal, and to highlight some techniques of the construction.

\begin{example}
  \label{ex:Grid}
  Consider the set of tiles shown in Figure~\ref{fig:GridAlph}(a), and the SFT $X$ where the lines and colors of neighboring tiles must match.
  The configurations of $X$ are grids formed by infinitely many squares of equal size, or `degenerate' configurations containing infinite regions of the same color; see Figure~\ref{fig:GridAlph}(b) for an example.
  It is easy to see that $X$ is countable.
  
  Consider the map $\phi : X \to \{0,1\}^{\Z^2}$ that sends to $1$ those tiles that contain a vertical line (the two rightmost tiles in Figure~\ref{fig:GridAlph}), and to $0$ all other tiles.
  Then $\phi(X)$ is a countably covered vertically constant sofic shift.
  The projective subdynamics $P(\phi(X))$ consists of the periodic configurations ${}^\infty (0^n 1)^\infty$ for all $n \in \N$, the ultimately periodic configuration ${}^\infty 0 1 0^\infty$, and the uniform configuration ${}^\infty 0^\infty$, as well as their shifts.
\end{example}

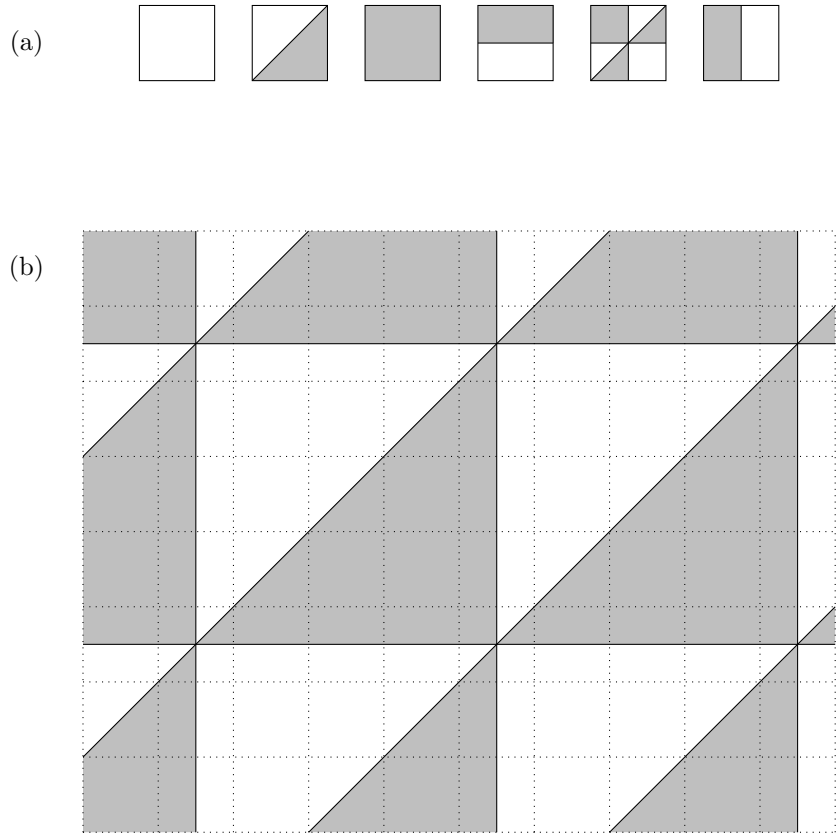
\begin{figure}[ht]
  \begin{center}
  \begin{tikzpicture}
  
  \node at (-1.5,0.5) {(a)};
  \node at (-1.5,-2.5) {(b)};
  
  \begin{scope}[shift={(0,0)}]
    \draw (0,0) rectangle (1,1);
  \end{scope}
  
  \begin{scope}[shift={(1.5,0)}]
    \fill [black!25] (0,0) -- (1,0) -- (1,1);
    \draw (0,0) -- (1,1);
    \draw (0,0) rectangle (1,1);
  \end{scope}
  
  \begin{scope}[shift={(3,0)}]
    \fill [black!25] (0,0) rectangle (1,1);
    \draw (0,0) rectangle (1,1);
  \end{scope}
  
  \begin{scope}[shift={(4.5,0)}]
    \fill [black!25] (0,0.5) rectangle (1,1);
    \draw (0,0.5) -- (1,0.5);
    \draw (0,0) rectangle (1,1);
  \end{scope}
  
  \begin{scope}[shift={(6,0)}]
    \fill [black!25] (0,0) -- (0.5,0) -- (0.5,0.5) -- (1,0.5) -- (1,1);
    \fill [black!25] (0,0.5) rectangle (0.5,1);
    \draw (0,0) -- (1,1);
    \draw (0,0.5) -- (1,0.5);
    \draw (0.5,0) -- (0.5,1);
    \draw (0,0) rectangle (1,1);
  \end{scope}
  
  \begin{scope}[shift={(7.5,0)}]
    \fill [black!25] (0,0) rectangle (0.5,1);
    \draw (0.5,0) -- (0.5,1);
    \draw (0,0) rectangle (1,1);
  \end{scope}

  \begin{scope}[xshift=-0.75cm,yshift=-10cm]
  
    \fill [black!25] (0,6.5) rectangle (1.5,8);
    \fill [black!25] (0,2.5) -- (1.5,2.5) -- (1.5,6.5) -- (0,5);
    \fill [black!25] (0,0) -- (1.5,0) -- (1.5,2.5) -- (0,1);
    
    \fill [black!25] (1.5,6.5) -- (5.5,6.5) -- (5.5,8) -- (3,8);
    \fill [black!25] (1.5,2.5) -- (5.5,2.5) -- (5.5,6.5);
    \fill [black!25] (3,0) -- (5.5,0) -- (5.5,2.5);
    
    \fill [black!25] (5.5,6.5) -- (9.5,6.5) -- (9.5,8) -- (7,8);
    \fill [black!25] (5.5,2.5) -- (9.5,2.5) -- (9.5,6.5);
    \fill [black!25] (7,0) -- (9.5,0) -- (9.5,2.5);
    
    \fill [black!25] (9.5,6.5) -- (10,6.5) -- (10,7);
    \fill [black!25] (9.5,2.5) -- (10,2.5) -- (10,3);
  
    \draw (1.5,0) -- (1.5,8);
    \draw (5.5,0) -- (5.5,8);
    \draw (9.5,0) -- (9.5,8);
    
    \draw (0,2.5) -- (10,2.5);
    \draw (0,6.5) -- (10,6.5);
    
    \draw (0,5) -- (3,8);
    \draw (0,1) -- (7,8);
    \draw (3,0) -- (10,7);
    \draw (7,0) -- (10,3);
    
    \draw [dotted] (0,0) grid (10,8);
    
  \end{scope}
  
  \end{tikzpicture}
  \end{center}
  \caption{(a) The alphabet of the grid shift of Example~\ref{ex:Grid}, and (b) an example configuration. Dotted lines denote tile borders.}
  \label{fig:GridAlph}
\end{figure}

\begin{example}
  \label{ex:Counting}
  Consider the one-dimensional shift space $X \subset \{0,1\}^\Z$ defined by forbidding those words over $\{0,1\}$ that, for some $n \in \N$, contain $1 0^n 1$ as a subword and contain more than $n$ occurrences of $1$.
  Then $X$ contains precisely those configurations where the number of $1$s is strictly less than the distance between any two occurrences of $1$.
  Since every configuration of $X$ contains a finite number of $1$s, it is a countable space.

\begin{figure}[ht]
  \begin{center}
  \begin{tikzpicture}

  \node at (-1,-0.25) {(a)};
  \node at (-1,-3.75) {(b)};
  \node at (-1,-6.5) {(c)};
  
  \begin{scope}[shift={(0,0)}]
    \draw (0,0) rectangle (1,1);
  \end{scope}
  
  \begin{scope}[shift={(1.5,0)}]
    \fill [black!15] (0,0) rectangle (1,1);
    \draw (0,0) rectangle (1,1);
  \end{scope}
  
  \begin{scope}[shift={(3,0)}]
    \fill [black!35] (0,0) -- (1,0) -- (1,1);
    \draw [dashed] (0,0) -- (1,1);
    \draw (0,0) rectangle (1,1);
  \end{scope}
  
  \begin{scope}[shift={(4.5,0)}]
    \fill [black!15] (0,0) -- (0,1) -- (1,1);
    \fill [black!35] (0,0) -- (1,0) -- (1,1);
    \draw [dashed] (0,0) -- (1,1);
    \draw (0,0) rectangle (1,1);
  \end{scope}
  
  \begin{scope}[shift={(6,0)}]
    \fill [black!35] (0,0) rectangle (1,1);
    \draw (0,0) rectangle (1,1);
  \end{scope}
  
  \begin{scope}[shift={(7.5,0)}]
    \fill [black!35] (0,0.5) rectangle (1,1);
    \draw [very thick] (0,0.5) -- (1,0.5);
    \draw (0,0) rectangle (1,1);
  \end{scope}

  \begin{scope}[shift={(9,0)}]
    \fill [black!35] (0,0.5) rectangle (1,1);
    \fill [black!15] (0,0) rectangle (1,0.5);
    \draw [very thick] (0,0.5) -- (1,0.5);
    \draw (0,0) rectangle (1,1);
  \end{scope}
  
  \begin{scope}[shift={(0,-1.5)}]
    \fill [black!35] (0,0) rectangle (0.5,1);
    \draw [very thick] (0.5,0) -- (0.5,1);
    \draw (0,0) rectangle (1,1);
  \end{scope}

  \begin{scope}[shift={(1.5,-1.5)}]
    \fill [black!35] (0,0) rectangle (0.5,1);
    \fill [black!15] (0.5,0) rectangle (1,1);
    \draw [very thick] (0.5,0) -- (0.5,1);
    \draw (0,0) rectangle (1,1);
  \end{scope}
  
  \begin{scope}[shift={(3,-1.5)}]
    \fill [black!35] (0,0) -- (0.5,0.5) -- (0.5,0);
    \fill [black!15] (0.5,0) rectangle (1,1);
    \draw [dashed] (0,0) -- (0.5,0.5);
    \draw [very thick] (0.5,0) -- (0.5,1);
    \draw (0,0) rectangle (1,1);
  \end{scope}
  
  \begin{scope}[shift={(4.5,-1.5)}]
    \fill [black!15] (0.5,0) rectangle (1,1);
    \draw [very thick] (0.5,0) -- (0.5,1);
    \draw (0,0) rectangle (1,1);
  \end{scope}
  
  \begin{scope}[shift={(6,-1.5)}]
    \fill [black!35] (0,0) -- (0.5,0) -- (0.5,0.5) -- (1,0.5) -- (1,1);
    \fill [black!35] (0,0.5) rectangle (0.5,1);
    \fill [black!15] (0,0) -- (0.5,0.5) -- (0,0.5);
    \draw [very thick] (0.5,0) -- (0.5,1);
    \draw [very thick] (0,0.5) -- (1,0.5);
    \draw [dashed] (0,0) -- (1,1);
    \draw (0,0) rectangle (1,1);
  \end{scope}
  
  \begin{scope}[shift={(7.5,-1.5)}]
    \fill [black!35] (0,0) -- (0.5,0) -- (0.5,0.5) -- (1,0.5) -- (1,1);
    \fill [black!35] (0,0.5) rectangle (0.5,1);
    \fill [black!15] (0,0) -- (0,0.5) -- (0.5,0.5) -- (0.5,1) -- (1,1);
    \fill [black!15] (0.5,0) rectangle (1,0.5);
    \draw [very thick] (0.5,0) -- (0.5,1);
    \draw [very thick] (0,0.5) -- (1,0.5);
    \draw [dashed] (0,0) -- (1,1);
    \draw (0,0) rectangle (1,1);
  \end{scope}
  
  \begin{scope}[shift={(9,-1.5)}]
    \fill [black!35] (0.5,0.5) -- (1,0.5) -- (1,1);
    \fill [black!35] (0,0.5) rectangle (0.5,1);
    \fill [black!15] (0.5,0) rectangle (1,0.5);
    \fill [black!15] (0.5,0.5) -- (0.5,1) -- (1,1);
    \draw [very thick] (0.5,0) -- (0.5,1);
    \draw [very thick] (0,0.5) -- (1,0.5);
    \draw [dashed] (0.5,0.5) -- (1,1);
    \draw (0,0) rectangle (1,1);
  \end{scope}

  \begin{scope}[shift={(0,-3.5)}]
    \draw (0,0) rectangle (1,1);
  \end{scope}
  
  \begin{scope}[shift={(1.5,-3.5)}]
    \fill [black!25] (0,0) rectangle (1,1);
    \draw (0,0) rectangle (1,1);
  \end{scope}
  
  \begin{scope}[shift={(3,-3.5)}]
    \fill [black!25] (0,0.5) rectangle (1,1);
    \draw [very thick] (0,0.5) -- (1,0.5);
    \draw (0,0) rectangle (1,1);
  \end{scope}
  
  \begin{scope}[shift={(4.5,-3.5)}]
    \fill [black!25] (0,0) rectangle (1,0.5);
    \draw [dashed] (0,0.5) -- (1,0.5);
    \draw (0,0) rectangle (1,1);
  \end{scope}

  \begin{scope}[shift={(6,-3.5)}]
    \draw [very thick] (0.5,0) -- (0.5,1);
    \draw (0,0) rectangle (1,1);
  \end{scope}
  
  \begin{scope}[shift={(7.5,-3.5)}]
    \fill [black!25] (0,0) rectangle (1,0.5);
    \draw [very thick] (0.5,0) -- (0.5,1);
    \draw [dashed] (0,0.5) -- (1,0.5);
    \draw (0,0) rectangle (1,1);
  \end{scope}
  
  \begin{scope}[shift={(9,-3.5)}]
    \fill [black!25] (0,0) rectangle (1,1);
    \draw [very thick] (0.5,0) -- (0.5,1);
    \draw (0,0) rectangle (1,1);
  \end{scope}
  
  \begin{scope}[shift={(0,-5)}]
    \draw [ultra thick,double] (0.5,0) -- (0.5,1);
    \draw (0,0) rectangle (1,1);
  \end{scope}
  
  \begin{scope}[shift={(1.5,-5)}]
    \fill [black!25] (0.5,0) rectangle (1,0.5);
    \draw [dashed] (0.5,0.5) -- (1,0.5);
    \draw [ultra thick,double] (0.5,0) -- (0.5,1);
    \draw (0,0) rectangle (1,1);
  \end{scope}
  
  \begin{scope}[shift={(3,-5)}]
    \fill [black!25] (0,0) rectangle (0.5,0.5);
    \fill [black!25] (0.5,0) rectangle (1,1);
    \draw [dashed] (0,0.5) -- (0.5,0.5);
    \draw [ultra thick,double] (0.5,0) -- (0.5,1);
    \draw (0,0) rectangle (1,1);
  \end{scope}
  
  \begin{scope}[shift={(4.5,-5)}]
    \fill [black!25] (0,0) rectangle (1,1);
    \draw [ultra thick,double] (0.5,0) -- (0.5,1);
    \draw (0,0) rectangle (1,1);
  \end{scope}
  
  \begin{scope}[shift={(6,-5)}]
    \fill [black!25] (0,0.5) rectangle (1,1);
    \draw [very thick] (0,0.5) -- (1,0.5);
    \draw [ultra thick,double] (0.5,0) -- (0.5,1);
    \draw (0,0) rectangle (1,1);
  \end{scope}
  
  \begin{scope}[shift={(7.5,-5)}]
    \fill [black!25] (0,0.5) rectangle (1,1);
    \draw [very thick] (0,0.5) -- (1,0.5);
    \draw [very thick] (0.5,0) -- (0.5,1);
    \draw (0,0) rectangle (1,1);
  \end{scope}

  \begin{scope}[shift={(0,-7)}]
    \draw (0,0) rectangle (1,1);
  \end{scope}
  
  \begin{scope}[shift={(1.5,-7)}]
    \fill [black!25] (0.5,0) rectangle (1,1);
    \draw [dashed] (0.5,0) -- (0.5,1);
    \draw (0,0) rectangle (1,1);
  \end{scope}
  
  \begin{scope}[shift={(3,-7)}]
    \fill [black!25] (0,0) rectangle (1,1);
    \draw (0,0) rectangle (1,1);
  \end{scope}
  
  \begin{scope}[shift={(4.5,-7)}]
    \fill [black!25] (0,0) rectangle (0.5,1);
    \draw [very thick] (0.5,0) -- (0.5,1);
    \draw (0,0) rectangle (1,1);
  \end{scope}
  
  \begin{scope}[shift={(6,-7)}]
    \draw [very thick] (0.5,0) -- (0.5,1);
    \draw (0,0) rectangle (1,1);
  \end{scope}
  
  \end{tikzpicture}
  \end{center}
  \caption{The three layers of the alphabet of the shift space $Z$ of Example~\ref{ex:Counting}: (a) the grid layer, (b) the counter layer, and (c) the synchronization layer.}
  \label{fig:CountingAlph}
\end{figure}
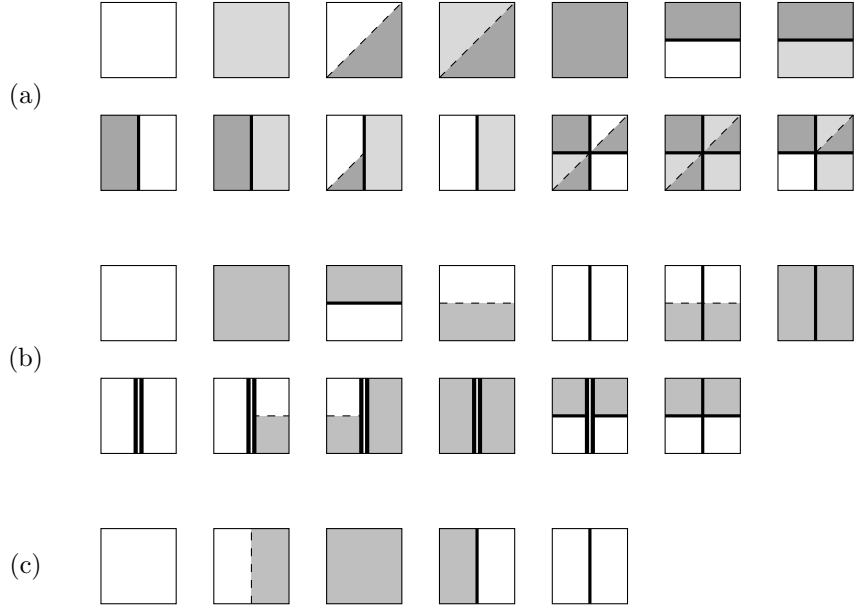

\begin{figure}[ht]
  \begin{center}
  \begin{tikzpicture}
  
    \fill [black!15] (0,0) rectangle (5.5,8);
    \fill [black!15] (7.5,0) rectangle (10,8);
  
    \fill [black!35] (0,6.5) rectangle (1.5,8);
    \fill [black!35] (0,2.5) -- (1.5,2.5) -- (1.5,6.5) -- (0,5);
    \fill [black!35] (0,0) -- (1.5,0) -- (1.5,2.5) -- (0,1);
    
    \fill [black!35] (1.5,6.5) -- (5.5,6.5) -- (5.5,8) -- (3,8);
    \fill [black!35] (1.5,2.5) -- (5.5,2.5) -- (5.5,6.5);
    \fill [black!35] (3,0) -- (5.5,0) -- (5.5,2.5);
    
    \fill [black!35] (5.5,6.5) -- (7.5,6.5) -- (7.5,8) -- (7,8);
    \fill [black!35] (5.5,2.5) -- (7.5,2.5) -- (7.5,4.5);
    \fill [black!35] (7,0) -- (7.5,0) -- (7.5,0.5);
    
    \fill [black!35] (7.5,6.5) -- (10,6.5) -- (10,8) -- (9,8);
    \fill [black!35] (7.5,2.5) -- (10,2.5) -- (10,5);
    \fill [black!35] (9,0) -- (10,0) -- (10,1);
  
    \draw [very thick] (1.5,0) -- (1.5,8);
    \draw [very thick] (5.5,0) -- (5.5,8);
    \draw [very thick] (7.5,0) -- (7.5,8);
    
    \draw [very thick] (0,2.5) -- (10,2.5);
    \draw [very thick] (0,6.5) -- (10,6.5);
    
    \draw [dashed] (0,5) -- (3,8);
    \draw [dashed] (0,1) -- (7,8);
    \draw [dashed] (3,0) -- (7.5,4.5);
    \draw [dashed] (7,0) -- (7.5,0.5);
    \draw [dashed] (7.5,6.5) -- (9,8);
    \draw [dashed] (7.5,2.5) -- (10,5);
    \draw [dashed] (9,0) -- (10,1);
    
    \draw [dotted] (0,0) grid (10,8);
    
  \end{tikzpicture}
  \end{center}
  \caption{An example configuration of the grid layer is Example~\ref{ex:Counting}.}
  \label{fig:GridExample}
\end{figure}
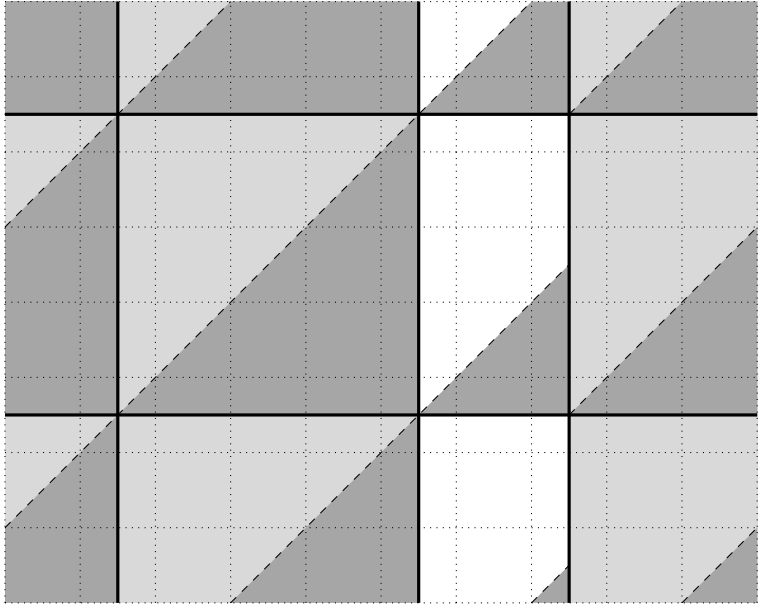

\begin{figure}[ht]
  \begin{center}
  \begin{tikzpicture}
  
    \fill [black!25] (0,6.5) -- (10,6.5) -- (10,8) -- (1.5,8) -- (1.5,7.5) -- (0,7.5);
    \fill [black!25] (0,2.5) -- (10,2.5) -- (10,5.5) -- (7.5,5.5) -- (7.5,4.5) -- (1.5,4.5) -- (1.5,3.5) -- (0,3.5);
    \fill [black!25] (1.5,0) -- (10,0) -- (10,1.5) -- (7.5,1.5) -- (7.5,0.5) -- (1.5,0.5);
    
    \draw [very thick] (0,2.5) -- (10,2.5);
    \draw [very thick] (0,6.5) -- (10,6.5);
    
    \draw [dashed] (0,7.5) -- (1.5,7.5);
    \draw [dashed] (0,3.5) -- (1.5,3.5);
    
    \draw [dashed] (1.5,4.5) -- (7.5,4.5);
    \draw [dashed] (1.5,0.5) -- (7.5,0.5);
    
    \draw [dashed] (7.5,5.5) -- (10,5.5);
    \draw [dashed] (7.5,1.5) -- (10,1.5);
    
    \draw [dotted] (0,0) grid (10,8);
  
    \draw [ultra thick,double] (1.5,0) -- (1.5,8);
    \draw [very thick] (5.5,0) -- (5.5,8);
    \draw [ultra thick,double] (7.5,0) -- (7.5,8);
    
  \end{tikzpicture}
  \end{center}
  \caption{An example configuration of the counter layer of Example~\ref{ex:Counting}. The thick lines coincide with those of Figure~\ref{fig:GridExample}.}
  \label{fig:CounterExample}
\end{figure}
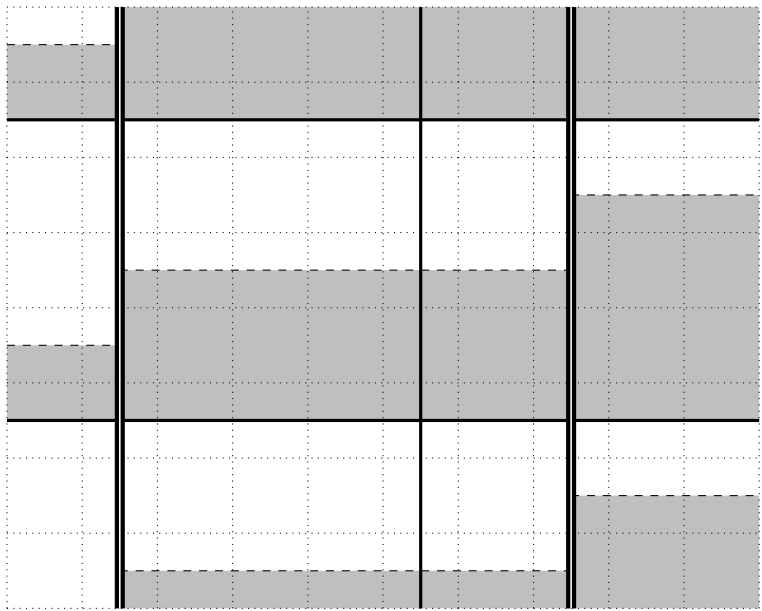

  Let $Y  = P^\dag(X) \subset \{0,1\}^{\Z^2}$.
  We show that $Y$ is a countably covered sofic shift by constructing a countable SFT $Z$ and a factor map $\phi : Z \to Y$.
  The alphabet of $Z$ consists of three layers: the \emph{grid layer} (which is somewhat similar to the SFT of Example~\ref{ex:Grid} but more complex), the \emph{counter layer}, and the \emph{synchronization layer}.
  The idea is that the height of the grid cells on the grid layer is determined by the minimum distance between two $1$s, the counter layer uses this information to globally bound the number of $1$s, and the synchronization layer enforces some additional constraints to ensure countability.
  The tiles of the layers are shown in Figure~\ref{fig:CountingAlph}, and example configurations of the first two layers are shown in Figure~\ref{fig:GridExample} and Figure~\ref{fig:CounterExample}.
  The lines and colors of each layer must be correctly matched in neighboring tiles.
  Furthermore, we have the following rules between the layers:
  \begin{enumerate}
  \item The thick vertical lines of the grid layer must match the thick vertical lines of the counter layer and the synchronization layer.
  \item The thick horizontal lines of the grid layer must match the thick horizontal lines of the counter layer.
  \item If the dashed vertical line of the synchronization layer crosses the dashed diagonal line of the grid layer, then the dashed horizontal line of the counter layer must be present at that position as well.
  \item A double vertical line on the counter layer must be paired with a tile of the grid layer that has a light gray region in its right half.
  \item A vertical line of the grid layer with a white region on its left must be paired with a double vertical line on the counter layer.
  \end{enumerate}
  Otherwise, the three layers are independent.
  
  These rules force the following structure to $Z$.
  On the grid layer, we have an infinite grid of rectangular areas which are at least as high as they are wide (possibly with degenerate infinite grid cells).
  Their top halves are colored either white (if they are higher than they are wide) or light gray (if they are square-shaped).
  The colors of a single vertical column of grid cells must match.
  In particular, in a configuration with only finite grid cells and at least one light gray column, the heights of all grid cells are equal.
  On the counter layer, each horizontal row of grid cells of height $h$ contains one counter, which may be incremented at most $h-1$ times at the vertical borders between grid cells.
  A vertical border is a doubled line if an increment happens at that position.
  By rule 3, the counters of different rows are synchronized by the vertical signals of the synchronization layer, and by rule 4, an incrementation of the counter causes the next column of grid cells to be light gray, and hence square-shaped.
  Conversely, rule 5 ensures that a non-square column of grid cells is accompanied by an increase of the counter, so only finitely many columns of grid cells can be non-square.
  In particular, since the counter can be incremented finitely many times, a configuration can contain only finitely many white columns.
  
  We claim that the SFT $Z$ is countable.
  Consider first a configuration that contains a finite grid cell, wich forces the entire configuration to consist of a grid of finite cells.
  There are countably many choices of the height of the grid cells, and after that, countably many choices for the finitely many columns of non-square grid cells and the values of the counters.
  Other parts of the configuration are forced by these choices.
  An infinite grid cell cannot have finite width or height, so there are countably many choices for configurations containing them as well.
  
  The block code $\phi$ sends each double vertical line on the counter layer to $1$, and all other tiles to $0$.
  Thus, in the image of a configuration with height-$h$ rectangles, there can be at most $h-1$ columns of $1$s (since counters can be incremented at most $h-1$ times), and the distance between consecutive columns of $1$s is at least $h$ (since each incrementation must be followed by a column of square grid cells).
  With the help of non-square grid cells, the distance can in fact be any number at least $h$, independently for each pair of consecutive $1$s.
  Thus $\phi(Z) = Y$.
\end{example}

\section{Partial Converse}

We now prove a partial converse to Corollary~\ref{cor:Main} in a restricted class of shift spaces.
The proof is a relatively complicated construction spanning most of the remaining article.
We begin with some closure properties of the class of shift spaces that satisfy the countable cover conditions.

\begin{lemma}
  \label{lem:Simplification}
  Let $W \subset \Sigma^\Z$ be an edge shift and $Z \subset \Gamma^\Z$ a sofic shift, and let $\psi : W \to Z$ be a right-resolving factor map.
  Let $Y \subset Z$ be a shift space that satisfies the countable cover conditions.
  Then $X = \psi^{-1}(Y)$ satisfies the countable cover conditions as well.
\end{lemma}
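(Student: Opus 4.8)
The plan is to verify the three countable cover conditions for $X$ one at a time. Two of them reduce to the corresponding properties of $Y$ by elementary means, and the entire weight of the lemma falls on the ultimate periodicity condition, which is exactly where right-resolvingness is needed. Throughout, write $\Sigma = E$ for the edge set of $G$ and let $\lambda : \Sigma \to \Gamma$ be the right-resolving labeling inducing $\psi$, so that $\psi(x)_n = \lambda(x_n)$.

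For the effectiveness condition, fix a computably enumerable set $F \subseteq \Gamma^+$ of forbidden words defining $Y$. A configuration $x \in W$ lies in $X$ if and only if $\lambda(x)$ avoids every word of $F$, so $X$ is the subshift of $\Sigma^\Z$ defined by the finitely many path constraints of the edge shift $W$ together with all words $v \in \Sigma^+$ such that $\lambda(v) \in F$. This forbidden set is computably enumerable because $F$ is and $\lambda$ is computable, so $X$ is effectively closed. For the decidability condition, given $u,v,w \in \Sigma^+$, note that ${}^\infty u v w^\infty \in X$ holds if and only if ${}^\infty u v w^\infty$ is a bi-infinite path in $G$ (a local, hence decidable, check on the finitely many seams of this eventually periodic word) and $\lambda({}^\infty u v w^\infty) = {}^\infty \lambda(u) \lambda(v) \lambda(w)^\infty$ lies in $Y$. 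Since $\lambda(u), \lambda(v), \lambda(w) \in \Gamma^+$, the latter is decidable by the third countable cover condition for $Y$, and the conjunction of the two decidable tests settles membership.

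It remains to show that every $x \in X$ is ultimately periodic to the left and right. Put $y = \psi(x) \in Y$, which by hypothesis is ultimately periodic in both directions. Consider first the right: suppose $y$ has period $q$ on $[N, \infty)$ with period block $s = y_N \cdots y_{N+q-1}$, and let $v_n$ denote the source vertex of the edge $x_n$. Right-resolvingness makes the length-$q$ path starting at any vertex and labeled $s$ unique when it exists, so it induces a partial map $f$ on the finite vertex set $V$ with $f(v_{N+kq}) = v_{N+(k+1)q}$ for all $k \ge 0$. Thus $(v_{N+kq})_{k \ge 0}$ is a forward orbit of a partial map on a finite set, hence eventually periodic; since the labels are fixed and each block is reconstructed deterministically from its initial vertex, $x$ is ultimately periodic to the right.

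The left direction is the crux and the one genuine obstacle, because right-resolvingness constrains forward continuations but says nothing about backward ones: a left-infinite path with periodic label could \emph{a priori} branch forever. Let $y$ have period $q$ on $(-\infty, M]$ with block $s'$, and let $f$ be the analogous partial map. The relevant vertex sequence is now a backward orbit of $f$, that is, an infinite chain of $f$-preimages $\cdots \mapsto a_2 \mapsto a_1 \mapsto a_0$. The decisive observation is that the functional graph of $f$ has out-degree at most one, so every vertex occurring in a backward-infinite orbit admits arbitrarily long backward walks; in a finite graph of out-degree at most one these are precisely the vertices lying on a cycle, and a chain of preimages through cycle vertices is forced to traverse a single cycle periodically. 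Hence the left vertex sequence is periodic, and reconstructing each block deterministically from its initial vertex shows that $x$ is ultimately periodic to the left as well. This establishes the second condition and completes the proof.
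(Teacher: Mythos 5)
Your proof is correct and follows essentially the same route as the paper's: conditions 1 and 3 reduce directly to the corresponding properties of $Y$ (plus the local, decidable check that ${}^\infty u v w^\infty$ is a bi-infinite path in $G$), and condition 2 is extracted from the partial transition map on a finite set that right-resolvingness provides. The only divergence is in the left tail, where the paper iterates the transition map \emph{forward} from starting points arbitrarily deep in the tail and uses that the resulting preperiods and periods are uniformly bounded by $|G|$, whereas you analyze the backward orbit of preimages directly and show it must settle onto a cycle; both arguments are sound.
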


\begin{proof}
  As a preimage of an effectively closed shift space, $X$ is effectively closed.

  Let $x \in X$ be arbitrary, and denote its ultimately periodic image by $\phi(x) = {}^\infty u v w^\infty$, where $u, v, w \in \Gamma^+$.
  Let $G = (V, E)$ be the graph associated to the edge shift $W$ (with $E = \Sigma$).
  Denote by $\rho_u : \Sigma \to \Sigma \cup \{\#\}$ the partial transition function induced by the word $u$, which is well-defined by the right-resolvingness of $\psi$, and similarly for $w$.
  If $x_i = s \in \Sigma$ and $\phi(x)_{[i, i+|G||u|-1]} = u^{|G|}$, then there exist $m, n \leq |G|$ such that $x_{i+m|u|} = \rho^m_u(s) = \rho^{m+k n}_u(s) = x_{i+(m + k n)|u|}$ for all $k \in \N$.
  This implies that $x$ is ultimately periodic to the left with period $n|u|$.
  Similarly, it is ultimately periodic to the right with period $n' |w|$ for some $n' \leq |G|$.
  This implies that $X$ satisfies the second countable cover condition.

  Let $u, v, w \in \Gamma^+$ be arbitrary.
  The condition ${}^\infty u v w^\infty \in X$ is equivalent to $\psi({}^\infty u v w^\infty) \in Y$.
  This configuration is ultimately periodic, and the periods and transient part are easily computable from $u$, $v$ and $w$.
  Hence $X$ satisfies the third countable cover condition.
\end{proof}

\begin{lemma}
  \label{lem:SubSFT}
  Let $Y \subset \Gamma^\Z$ be a shift space that satisfies the countable cover conditions, and let $Z \subset \Gamma^\Z$ be an SFT.
  Then $X = Y \cap Z$ satisfies the countable cover conditions as well.
\end{lemma}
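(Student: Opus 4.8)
The plan is to verify the three countable cover conditions for $X = Y \cap Z$ in turn; each is inherited from $Y$, with the SFT $Z$ contributing only a finite extra ingredient that is easily handled.

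The first condition is immediate: $Z$, being an SFT, is defined by a finite set $F_Z$ of forbidden patterns, and $Y$ is defined by some computably enumerable set $F_Y$ of forbidden patterns, so $X = Y \cap Z$ is defined by the computably enumerable set $F_Y \cup F_Z$ and is therefore effectively closed. The second condition is also immediate, since every $x \in X$ lies in $Y$ and hence is ultimately periodic to the left and right by the second countable cover condition applied to $Y$.

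For the third condition, fix $u, v, w \in \Gamma^+$. We have ${}^\infty u v w^\infty \in X$ if and only if it lies in both $Y$ and $Z$. Membership in $Y$ is decidable by the third countable cover condition for $Y$. For membership in $Z$, let $N$ be an upper bound on the lengths of the patterns in $F_Z$. Every factor of ${}^\infty u v w^\infty$ of length at most $N$ already occurs in the finite word $u^{N+1} v w^{N+1}$, so ${}^\infty u v w^\infty \in Z$ if and only if no pattern of $F_Z$ occurs in $u^{N+1} v w^{N+1}$, and this can be checked directly. Hence membership in $X$ is decidable, and $X$ satisfies the third condition.

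The whole verification is routine; the only mildly delicate point is the bounded-window reduction in the last paragraph, which rests on the elementary observation that an explicitly presented ultimately periodic configuration is genuinely periodic outside a bounded central region, so only finitely many of its factors need to be inspected in order to test membership in an SFT.
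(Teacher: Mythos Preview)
Your proof is correct and follows essentially the same approach as the paper's: both verify the three conditions in turn, using closure of effectively closed shifts under intersection, the inclusion $X \subset Y$, and the decidability of membership of an ultimately periodic configuration in an SFT. You simply spell out the last point in slightly more detail via the bounded-window reduction, which is exactly what the paper's phrase ``since $Z$ is defined by finitely many local rules'' is abbreviating.
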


\begin{proof}
  The first countable cover condition holds, since effectively closed shifts are closed under finite intersection.
  The second condition is immediate, since $X \subset Y$.
  Given $u, v, w \in \Gamma^+$, we can decide ${}^\infty u v w^\infty \in Y$ by assumption, and ${}^\infty u v w^\infty \in Z$ is decidable since $Z$ is defined by finitely many local rules.
  Hence $X$ satisfies the third countable cover condition.
\end{proof}

\begin{theorem}
  \label{thm:Converse}
  Let $Z \subset \Gamma^\Z$ be a countable sofic shift, let $d \geq 2$, and let $Y \subset \Gamma^{\Z^d}$ be a vertically constant countable shift space with $P(Y) \subset Z$.
  If $Y$ satisfies the countable cover conditions, then it is countably covered.
\end{theorem}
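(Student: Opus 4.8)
The plan is to construct the cover explicitly rather than to appeal to Lemma~\ref{lem:BlackBox}, since the point is not soficity (which that lemma already grants) but the \emph{countability} of the cover, and an explicit construction is what lets me control the number of configurations. Because $Y$ is vertically constant we have $Y = P^\dag(P(Y))$, so writing $X_1 = P(Y) \subset Z$ it suffices to produce a countable SFT $X$ with a factor map onto the vertically constant shift whose rows form $X_1$. I would first apply Lemma~\ref{lem:CountableStructure} to the countable sofic shift $Z$, obtaining an edge shift $W = X_G$ and a right-resolving factor map $\psi : W \to Z$ with $G$ a disjoint union of the cycle-and-path components described there. The essential feature is that every bi-infinite path in such a $G$ is ultimately periodic in both directions: to the far left it loops in some cycle $c_{i,a}$, to the far right in some $c_{i,b}$ with $a \le b$, and in between it crosses the connecting paths $p_{i,a+1}, \ldots, p_{i,b}$ while looping finitely (but unboundedly) often in the intermediate cycles. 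Hence $W$ is countable, the vertically constant shift $P^\dag(W)$ is a \emph{countable} two-dimensional SFT whose rows record, explicitly in the edge alphabet, the combinatorial type of a path in $G$, and the rowwise composition of $\psi$ with the labeling $Z \subseteq \Gamma^\Z$ is a block code from $P^\dag(W)$ onto $P^\dag(Z)$.

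The target $Y = P^\dag(X_1)$ is precisely the subshift of $P^\dag(Z)$ whose rows lie in $X_1$, equivalently the image of $P^\dag(W')$ for the lifted shift $W' = \psi^{-1}(X_1)$. By Lemma~\ref{lem:Simplification}, $W'$ again satisfies the countable cover conditions; in particular, after reading off a left period $u$, a transient $v$, and a right period $w$ from a skeleton path (which the edges make directly visible, since cycle edges and connecting-path edges are distinguishable), membership ${}^\infty u v w^\infty \in X_1$ is \emph{decidable}. I would lean on genuine decidability rather than semidecidability here, because the image must be \emph{exactly} $X_1$: the cover has to accept every valid row and reject every invalid one.

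Over the skeleton $P^\dag(W)$ I would then layer a simulation of an integer counter machine with an oracle, in the style of Section~\ref{sec:Machines}, whose oracle layer is forced by a local matching rule to coincide with the skeleton row. Scanning the row through the oracle counter, the machine extracts the finite words $u$ and $w$ from the two periodic ends and the finite transient $v$ from the middle, encodes $v$ into its counters, and runs the decision procedure for ${}^\infty u v w^\infty \in X_1$ guaranteed by the third condition, entering a tiling-error state exactly on the ``no'' instances and a benign looping state on the ``yes'' instances. Taking $\phi$ to forget the computation layer and send each skeleton row through $\psi$ and the $Z$-labeling yields a vertically constant image, and one checks $\phi(X) = Y$ directly: each $y \in Y$ has a row in $X_1$, any $\psi$-preimage of which admits an error-free tiling, while every configuration of $X$ carries a row whose label lies in $X_1$. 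The auxiliary local rules can be folded in through Lemma~\ref{lem:SubSFT} if convenient.

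The delicate step, and the one I expect to be the main obstacle, is keeping $X$ countable. The skeleton $P^\dag(W)$ is countable because $W$ is, so all danger resides in the computation layer: an \emph{unconstrained} oracle would make the simulating SFT uncountable, exactly the phenomenon flagged in Section~\ref{sec:Machines}. The resolution is that the oracle is pinned to the countable skeleton, and for each fixed row the counter machine evolves deterministically, so the computation layer contributes only countably many configurations for each of the countably many rows. Making this rigorous requires verifying that the residual freedom --- the placement and growth of the computation cone, and the transient portion before the head is born --- is itself countable, exactly as in the argument that the plain machines $X_M$ of Section~\ref{sec:Machines} are countable. The genuinely hard part of a complete write-up will be designing the machine so that extracting the unbounded transient $v$ and running the decider is simultaneously locally checkable and free of any uncountable choice.
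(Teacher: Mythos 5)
Your reduction via Lemma~\ref{lem:CountableStructure} and Lemma~\ref{lem:Simplification}, and your use of an oracle counter machine to run the decision procedure supplied by the third countable cover condition, match the opening and closing moves of the paper's proof. But the step you defer to ``the genuinely hard part of a complete write-up'' is not a residual technicality --- it is the core of the theorem, and your sketch of it does not go through. The difficulty is not that the oracle is uncountable (you correctly pin it to the countable skeleton); it is that local rules cannot force the verification computation to occur \emph{exactly once}. If the machine is permitted to sit idle (empty computation cone, or parked in a benign looping state), then by compactness $X$ contains configurations in which the computation never runs, and these accept rows outside $P(Y)$, so $\phi(X) \neq Y$. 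If instead you force the computation to recur, it must restart infinitely often, and the gaps between restarts become free parameters: a deterministic machine over a fixed row still yields uncountably many configurations, one for each bi-infinite sequence of waiting times. Determinism controls the computation once started, not when it starts --- this is exactly the obstruction the paper flags after defining the verification machine layer.

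The paper's resolution, absent from your proposal, is to add a grid layer anchored on the period breakers and a second tier of simulated counter machines inside the grid cells; these collectively simulate tilings by Robinson-style tile sets $R_n$ whose maximal square size $g(n)$ is tuned (via $g(n) \geq H(\exp(G(n)))$) so that reset rows occur \emph{periodically}, with a period large enough for the verification machine to finish yet rigid enough that the vertical placement of the resets is the only remaining degree of freedom. This forces vertical periodicity of every nondegenerate configuration, which is what delivers countability, and simultaneously guarantees the verification actually runs. Two further points you gloss over: with several period breakers the transient has unbounded length and the machine needs unboundedly much time, so the reset period must depend on the geometry of the row (hence one layer $X_{i,j}$ per pair of breakers, plus a separate regime for grid cells too small to host the tile simulation); and on a ``no'' instance the machine cannot simply declare an error from the undecidable-looking fact ${}^\infty u v w^\infty \notin P(Y)$ --- it must compute a finite word $u_{i-1}^p w u_j^p$ not occurring in $P(Y)$ and physically read that witness off the configuration, since a tiling error has to be justified by a finite forbidden pattern.
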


\begin{proof}
  We prove the result in the case $d = 2$.
  The general case follows easily by considering an SFT cover which is constant in each dimension except the first two.

  We first simplify the structure of $Z$ in order to make it easier to handle in the construction.
  Let $W \subset \Sigma^\Z$ be an edge shift and $\psi : W \to Z$ a right-resolving factor map as given by Lemma~\ref{lem:CountableStructure}.
  By Lemma~\ref{lem:Simplification}, the $\psi$-preimage of $P(Y)$ satisfies the countable cover conditions.
  Let $Y' \subset \Gamma^{\Z^2}$ be a vertically constant shift space with $P(Y') = \psi^{-1}(P(Y))$.
  If $Y'$ is countably covered, then so is $Y = \psi(Y')$.
  Hence we can safely assume that $Z$ is an edge shift satisfying the conditions of Lemma~\ref{lem:CountableStructure} by replacing it with $Z'$ if necessary.
  Thus $Z$ is a finite (not necessarily disjoint) union of SFTs of the form
  \begin{equation}
  \label{eq:Component}
    \mathrm{Cl}( \{ {}^\infty u_0 v_1 u_1^{n_1} v_2 u_2^{n_2} \cdots v_k u_k^\infty \;|\; n_1, \ldots, n_k \in \N \} )
  \end{equation}
  for some \emph{rank} $k \in \N$ and words $u_i, v_i \in \Gamma^+$, where the $u_i$ come from disjoint cycles in the graph and hence do not share any symbols.
  The $v_i$ consist of the paths between these cycles, so they do not share symbols with any other $v_j$ or $u_j$.
  The $u_i$ are called the \emph{period words} and the $v_i$ are called the \emph{period breakers}.
  By Lemma~\ref{lem:SubSFT}, we can consider each of these components separately, and take the disjoint union of the resulting two-dimensional countable SFTs to obtain a countable cover for $Y$.
  From now on, we assume that $Z$ has the form~\eqref{eq:Component} for some rank $k \in \N$ and words $u_i, v_i \in \Gamma^+$ that share no symbols.
  We may also assume that for all $N \in \N$, the shift space $P(Y)$ contains a configuration ${}^\infty u_0 v_1 u_1^{n_1} v_2 u_2^{n_2} \cdots v_k u_k^\infty$ with $n_i \geq N$ for all $1 \leq i \leq k$; if this is not the case, we can replace $Z$ by a finite union of components with rank $k-1$ that use some words of the form $v_i u_i^{n_i} v_{i+1}$ as period breakers.
  In particular, we have ${}^\infty u_i v_{i+1} u_{i+1}^\infty \in P(Y)$ for each $i$.
  
  We now begin the construction of the countable SFT $X$.
  It contains a number of layers, one for each pair of indices $1 \leq i < j \leq k$, corresponding to the period breakers $v_i$ and $v_j$.
  This layer is denoted by $X_{i,j}$, and its purpose is to run a computation verifying the correctness of a configuration that contains both $v_i$ and $v_j$.
  The SFT $X$ also contains a vertically constant layer whose rows come from $Z$, called the \emph{image layer}, which is further constrained by the other layers.
  The block code $\phi : X \to \Gamma^{\Z^2}$ is simply a projection onto the image layer.
  
  Each layer $X_{i,j}$ has the same basic structure, which we break into three sub-layers.
  The first layer is called the \emph{verification machine layer}.
  Let $M_{i,j} = (k, Q, \Sigma, \delta, q_0, q_f)$ be a deterministic integer $k$-counter machine with one oracle counter.
  Let $V_{i,j}$ be the SFT defined in Section~\ref{sec:Machines} that simulates the machine $M_{i,j}$.
  It contains a computation cone that expands to the left and right, a special read-only counter that always maintains the value $0$, and an oracle layer that can be probed by the oracle counter.
  We modify $V_{i,j}$ as follows.
  First, the oracle layer is replaced by the image layer of $X$, so that the simulated machine is able to read its contents.
  When the zig-zag head enters the final state $q_f$, it no longer makes a new sweep, but stays at the origin indefinitely.
  We also introduce a new symbol $\#$, which is required to fill an entire horizontal row of the layer wherever it is present.
  If the next row contains the zig-zag head, it must be in the initial state $q_0$, with each counter set to $0$ and the computation cone having width $1$.
  If the previous row contains the zig-zag head, it must be in the final state.
  There are no other restrictions on $\#$.
  This means that the new symbol can effectively reset the computation after it has halted.
  On each horizontal row of the verification machine layer (except those that are filled with $\#$), we require that the anchor counter is placed on the leftmost symbol of $v_i$ on the image track.
  Since this symbol occurs only once in $v_i$, and does not occur in any word $v_p$ for $p \neq i$ or $u_p$ for $1 \leq p \leq k$, this uniquely determines the position of the anchor counter as a function of the image layer.
  
  The counter machine $M_{i,j}$ behaves as follows.
  First, it reads the entire string $w = v_i u_i^{n_i} v_{i+1} \cdots v_j \in \Sigma^+$ between the words $v_i$ and $v_j$ on the image layer (if $v_j$ is not present, this computation never halts).
  It then checks whether ${}^\infty u_{i-1} w u_j^\infty \in P(Y)$, which is computable by assumption.
  If this holds, then $M_{i,j}$ enters the final state and suspends the computation.
  If ${}^\infty u_{i-1} w u_j^\infty \notin P(Y)$, then $M_{i,j}$ computes a number $p \in \N$ such that $u_{i-1}^p w u_j^p$ does not occur in $P(Y)$, which is possible since $P(Y)$ has a computable language.
  Then, $M_{i,j}$ reads the $|u_{i-1}|^p$ symbols to the left of $v_i$, and the $|u_j|^p$ symbols to the right of $v_j$, obtaining an extended string $u w v \in \Sigma^+$.
  If $u w v$ occurs in $P(Y)$, then $M_{i,j}$ enters the final state and suspends the computation.
  If $u w v$ does not occur in $P(Y)$, then $M_{i,j}$ halts and produces a tiling error.
  
  A moment's reflection should convince the reader that if each machine $M_{i,j}$ for $1 \leq i < j \leq k$ is able to complete its computation in a configuration of $X$, then the image layer is a configuration of $P^\dag(Y)$.
  However, at this point of the construction it is not guaranteed that every valid configuration contains such computations, since the machines may stay in the final state $q_f$ on each row.
  Another issue is that the machines may perform their computations infinitely many times, with a row of $\#$-symbols between each run, and since the zig-zag head may wait in the final state $q_f$ an indeterminate number of steps before each reset, the number of such configurations is uncountable.
  The remaining layers of $X_{i,j}$ will fix these issues by forcing the $\#$-rows to occur periodically.
  
  The second sub-layer of $X_{i,j}$ is the \emph{grid}, which is exactly equivalent to the SFT of Example~\ref{ex:Grid}.
  We require that the leftmost symbol of $v_i$ and the leftmost symbol of $v_j$ (which are unique to these words and occur in them only once) are paired with vertical lines of the grid, and none of the symbols between them (the remaining symbols of $v_i$ and $v_j$, as well as all symbols of $u_p$ for $i \leq p < j$ and $v_p$ for $i < p < j$) are paired with vertical lines of the grid.
  This completely determines the locations of the vertical lines of the grid as a function of the image layer.
  Those grid cells that are bordered by $v_i$ and $v_j$ are called \emph{central}.
  The purpose of the grid is to provide a skeleton for another computational layer.
  If the image layer contains at least two period breakers, then at least one of the layers $X_{i,j}$ has a non-degenerate grid, on top of which we can force a computation.
  
  The final sub-layer is called the \emph{tile machine layer}, and it is used to run a computation inside each cell of the grid.
  These computations will be linked together in order to simulate a tiling in the grid.
  The purpose of the simulated tiling is to periodically send a signal to the verification machine layer, which in turn forces a row of $\#$-symbols that resets the computation of $M_{i,j}$.
  
  The computations of the tile machine layer are carried out by deterministic counter machines, which we embed into the grid shift.
  The simulation of computation will take place in the upper left half of each grid cell (copies of the solid white tile of Figure~\ref{fig:GridAlph}).
  For this, we take a deterministic $k$-counter machine $M = (k, Q, \delta, q_0, q_f)$ and modify the SFT $X_M$ defined in Section~\ref{sec:Machines}.
  We join the symbols $\#_L$ and $\#_R$ into one symbol, $\#$, which is overlaid on the tiles of the grid layer that are not solid white.
  On the solid white tiles we overlay other tiles of $S_M$.
  The bottom-most white tile initializes the computation in state $q_0$ with all internal counters set to $0$ (the input counters can have any value that fits in the grid cell).
  Once the machine enters the final state $q_f$, the zig-zag head stops and stays at the left end of the computation cone, similarly to the simulation of $M_{i,j}$.
  We require that the top left corner of each grid cell contains a zig-zag head in the state $q_f$.

  Note that the behavior of each simulated copy of $M$ is completely independent from the others.
  We now enhance the construction by allowing the machines to communicate with their neighbors.
  This is reminiscent of the fixed point construction of \cite{DuRoSh12}, where a tiling simulates a grid of Turing machines, which in turn collaborate to simulate another tileset.
  Each copy of $M$ has four read-only input counters: $\north{C}$, $\south{C}$, $\east{C}$ and $\west{C}$, called the \emph{north, south, east and west counters}.
  We would like to enforce by local rules that the north counter of each copy of $M$ is equal to the south counter of the copy to its north, and analogously for the east and west counters.
  In order to implement these constraints, we introduce new signals into the grid cells.
  We define new tile sets $\hsignal{S}$ and $\vsignal{S}$ in Figure~\ref{fig:SigAlph}.
  To each interior tile of a grid cell we overlay an element of the product alphabet $\signal{S} = \hsignal{S}^2 \times \vsignal{S}$, called the \emph{east, west and south signal layers}.
  In the interior of each grid cell, for each track of $\signal{S}$, the lines and colors of adjacent tiles must match, and the horizontal or vertical line must be present (this is enforced by requiring the corners of the grid cell to be adjacent to certain colors on each track).
  Also, the leftmost tile on the horizontal line must be the bottom left tile in Figure~\ref{fig:SigAlph}a.
  In each grid cell, the line on the west signal track must be at the same height as the line on the east signal track of the eastern neighboring grid cell.
  The east and west counters of each copy of $M$ must be positioned where the corresponding diagonal signals of $\signal{S}$ hit the top edge of the grid cell.
  The vertical line of the south signal layer must be positioned on the south counter, and the south signal must hit the southern border of the grid cell at the same position as the north signal of the southern neighboring grid cell.
  In this way, the desired couplings between the input counters of neighboring grid cells are achieved.

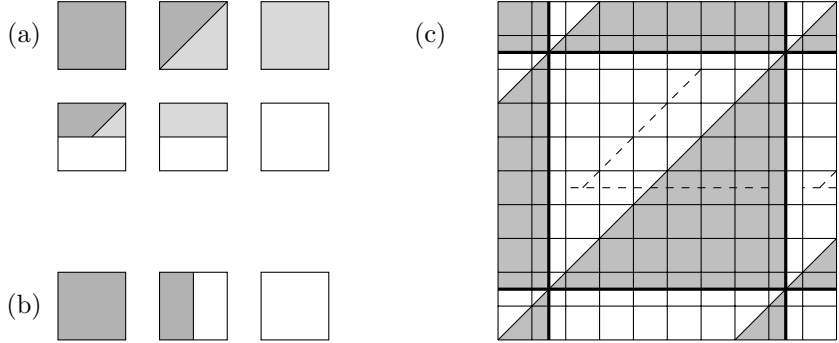
\begin{figure}[htp]
\begin{center}
\begin{tikzpicture}[scale=.9]

  \node at (0,4) {(a)};
  \node at (0,0) {(b)};
  \node at (6,4) {(c)};

  \begin{scope}[yscale=-1,yshift=-4.5cm,xshift=0.5cm]

  
  \begin{scope}[shift={(0,0)}]
  \draw[fill=black!30] (0,0) rectangle (1,1);
  \end{scope}
  
  \begin{scope}[shift={(1.5,0)}]
  \fill[black!30] (1,0) -- (0,1) -- (0,0);
  \fill[black!15] (1,0) -- (0,1) -- (1,1);
  \draw (1,0) -- (0,1);
  \draw (0,0) rectangle (1,1);
  \end{scope}
  
  \begin{scope}[shift={(3,0)}]
  \draw[fill=black!15] (0,0) rectangle (1,1);
  \end{scope}

  
  \begin{scope}[shift={(0,1.5)}]
  \fill[black!30] (0,0) -- (0,0.5) -- (0.5,0.5) -- (1,0);
  \fill[black!15] (0.5,0.5) -- (1,0.5) -- (1,0);
  \draw (0,0.5) -- (1,0.5);
  \draw (0.5,0.5) -- (1,0);
  \draw (0,0) rectangle (1,1);
  \end{scope}

  \begin{scope}[shift={(1.5,1.5)}]
  \fill[black!15] (0,0) rectangle (1,0.5);
  \draw (0,0.5) -- (1,0.5);
  \draw (0,0) rectangle (1,1);
  \end{scope}
  
  \begin{scope}[shift={(3,1.5)}]
  \draw (0,0) rectangle (1,1);
  \end{scope}
  
  \end{scope}

  \begin{scope}[xshift=0.5cm,yshift=-0.5cm]
  
  \begin{scope}[shift={(0,0)}]
  \draw[fill=black!30] (0,0) rectangle (1,1);
  \end{scope}
  
  \begin{scope}[shift={(1.5,0)}]
  \fill[black!30] (0,0) rectangle (0.5,1);
  \draw (0.5,0) -- (0.5,1);
  \draw (0,0) rectangle (1,1);
  \end{scope}
  
  \begin{scope}[shift={(3,0)}]
  \draw (0,0) rectangle (1,1);
  \end{scope}  
  
  \end{scope}

  
  \begin{scope}[shift={(7,-0.5)}]
  \fill [black!25] (0,0) rectangle (5,5);
  \fill [white] (0.75,0.75) -- (0,0) -- (0,0.75) -- (0.75,0.75) -- (0.75,0) -- (3.5,0) -- (4.25,0.75) -- (0.75,0.75) -- (4.25,4.25) -- (4.25,5) -- (5,5) -- (4.25,4.25) -- (4.25,0.75) -- (4.25,0) -- (5,0) -- (5,0.75) -- (4.25,0.75) -- (5,1.5) -- (5,4.25) -- (4.25,4.25) -- (0.75,4.25) -- (0,3.5) -- (0,4.25) -- (0.75,4.25) -- (0.75,5) -- (1.5,5) -- (0.75,4.25);
  \draw (0,0) -- (5,5);
  \draw (0,3.5) -- (1.5,5);
  \draw (3.5,0) -- (5,1.5);
  \draw [very thick] (0.75,0) -- (0.75,5);
  \draw [very thick] (4.25,0) -- (4.25,5);
  \draw [very thick] (0,0.75) -- (5,0.75);
  \draw [very thick] (0,4.25) -- (5,4.25);
  \draw [dashed] (5,2.25) -- (4.5,2.25);
  \draw [dashed] (4,2.25) -- (1,2.25);
  \draw [dashed] (1.25,2.25) -- (3,4);
  \draw [dashed] (4.75,2.25) -- (5,2.5);
  \draw [step=0.5] (0,0) grid (5,5);
  \end{scope}

\end{tikzpicture}
\end{center}
\caption{(a) The alphabet $\hsignal{S}$, (b) the alphabet $\vsignal{S}$, and (c) an illustration of the coupling between the west and east signals of neighboring grid cells. Counters and other signals are not shown.}
\label{fig:SigAlph}
\end{figure}

  Each simulated copy of $M$ executes the same program, which features a computable sequence of tile sets $(T_n)_{n \in \N}$.
  The edge colors of each $T_n$ are represented as integers, and the sets of colors of different tile sets are disjoint.
  Each machine checks that its north, south, east and west counters form a valid tile in one of the sets $T_n$, and if this is the case, enters the final state $q_f$.
  We say that the machine and the associated grid cell \emph{simulate} said tile.
  If the quadruple of numbers is not a valid tile, the machine halts and produces a tiling error.
  In this way, each grid cell acts as a tile of some $T_n$ (for the same $n$, since the colors are disjoint), and an infinite grid simulates a valid tiling over this set.
  Note that the number of computation steps generally depends on the values of the input counters, even if they represent tiles of the same set $T_n$.
  Since the simulated copies of $M$ are not required to halt at the same time and can wait in the state $q_f$ until the computation cone terminates, this is not an issue as long as the grid cells are large enough.
  
  We now describe the tile sets $T_n$ in more detail.
  Let $g : \N \to \N$ be a computable function depending on $Y$, which we fix later.
  The tiles of $T_n$ are similar to those of the grid shift in Example~\ref{ex:Grid}.
  In addition to the markings in Figure~\ref{fig:GridAlph}, on each tile not containing a horizontal or vertical line is superimposed a tile of an auxiliary tile set $R_n$.
  The tile sets $R_n$ have the following properties.
  \begin{itemize}
  \item Each $R_n$ is a north-west deterministic tile set.
  \item The tile set $R_n$ can tile a square region of the shape $[0, g(n)-1]^2$ so that the west border is colored uniformly with a color $W_n$, and the north border is colored uniformly with $N_n$.
  \item The tile set $R_n$ does not tile the infinite plane.
  \item The tile sets $R_n$ and the colors $W_n$ and $N_n$ are computable from $n$ by a counter machine in time $\exp(O(n^q))$ for some constant $q \in \N$, independently of the choice of $g$.
  \end{itemize}
  Such tile sets can be constructed as variants of the Robinson tiles of \cite{Ro71}, as was done in \cite{Ka92}.
  Note the requirement on the time complexity, which can be satisfied as follows.
  The construction relies on embedding a Turing machine computation in the Robinson tiling.
  If the machine halts after $m$ steps when initialized on an empty tape, the tile set can tile a square of shape $[0, m-1]^2$, but does not tile the infinite place.
  We can implement the tile sets $R_n$ using an embedded Turing machine that writes the number $n$ to the tape, counts to $g(n)$ and halts.
  Such an implementation can easily be realized in polynomial time by a Turing machine, which a counter machine can simulate with exponential slowdown.
  
  We require that the tiles of $R_n$ that are superimposed on tiles directly to the east of a vertical grid line of $T_n$ have west color $W_n$, and those that lie directly to the south of a horizontal grid line have north color $N_n$.
  The tiles of $R_n$ that lie to the north or west of the grid lines have no additional restrictions on their edge colors.
  The idea is that the tileset forces the occurrence of grid cells which are filled by tiles drawn from $R_n$, and hence have width and height at most $g(n)$.
  Because the tile set $R_n$ is north-west deterministic, each simulated grid cell has identical contents.
  Thus the SFT defined by $T_n$ is countable.
  We will use the horizontal rows of these simulated tilings to control the $\#$-rows of the verification machine layer.
  For this, we say that a grid cell of the grid layer is part of a \emph{reset row}, if the copy of $M$ running inside it simulates a tile of $T_n$ with a horizontal line.
  
  We relate the verification machine layer of $X_{i,j}$ to the grid and tile machine layers in the following way.
  The symbol $\#$ of the verification machine layer can only be paired with horizontal lines of the grid.
  If the zig-zag head of $M_{i,j}$ is in the final state $q_f$ and the next row of the grid layer contains a horizontal row, then the next row of the verification machine layer must be filled with $\#$-symbols.
  Moreover, on the bottom edge of those grid cells that are part of reset rows (whose copies of $M$ simulate the horizontal lines of $T_n$), the verification layer must have the symbol $\#$.
  This can be enforced by giving the information about the symbol on the verification machine layer to the copies of $M$ when they are initialized, as part of their internal state.
  These conditions have the following effects.
  The simulation of the counter machine $M_{i,j}$ is reset at the bottom edge of each reset row, and before this reset, the machine must have finished its computation and entered the final state.
  Conversely, when the machine $M_{i,j}$ has entered its final state, the next horizontal grid line must reset the computation, and the corresponding grid cells must form a reset row.
  Hence the computations of $M_{i,j}$ are synchronized with the simulated horizontal lines of $T_n$.
  Since $M_{i,j}$ is deterministic, it cannot change its behavior after a reset, so each configuration of $X_{i,j}$ containing a central column of grid cells is vertically periodic.
  
  We now describe the function $g$, which bounds the distance between two reset rows.
  For $m \in \N$, let $H(m)$ be the maximal amount of vertical space, measured in number of cells, needed for the simulation of the machine $M_{i,j}$ in a configuration where the distance between $v_i$ and $v_j$ (and thus the size of the grid cells) is $m$.
  In such a configuration, each simulated machine $M$ can perform at most $\log(m)$ computation steps before halting.
  For $n \in \N$, let $G(n)$ be the maximal number of steps required for $M$ to simulate a tile of $T_n$.
  The functions $G, H : \N \to \N$ are computable, and by our assumptions $G(n) = \exp(O(n^q))$, where the constants may depend on $g$.
  We may also assume that $G$ and $H$ are non-decreasing.
  Recall the function $g : \N \to \N$ in the definition of the tile sets $T_n$.
  We require
  \begin{equation}
  \label{eq:gBound}
    g(n) \geq H(\exp(G(n)))
  \end{equation}
  for all large enough $n \in \N$; this can be safely assumed since the right hand side is computable, and $H$ and $q$ are independent of the choice of $g$.
  For simplicity, we choose the same function $g$ for each choice of $i$ and $j$.
  Let $n_0 \in \N$ be such that~\eqref{eq:gBound} holds for all $n \geq n_0$, and denote $m_0 = \exp(G(n_0))$.
  
  Figure~\ref{fig:AllTogether} is a schematic diagram of a configuration of $X_{i,j}$.
  The squares are grid cells, each of which contains a simulated copy of $M$ (not shown).
  Dark gray squares simulate the horizontal, vertical and/or diagonal lines of $T_n$, while light gray and white squares simulate the uniformly colored tiles.
  The grid cells inside the large simulated grid cells also simulate the superimposed tiles of $R_n$.
  The thick horizontal lines are $\#$-symbols on the verification machine layer, and they lie on the bottom edges of the reset rows.
  The thick vertical line is the anchor of the verification machine layer, and the thick diagonal lines are the borders of the computation cone.
  The dotted line is the zig-zag head.
  Counters are not shown.
  The pattern repeats periodically to the north and south, and to the east and west only the simulated grid and the thick vertical lines are visible.
  
  \begin{figure}[htp]
  \begin{center}
  \begin{tikzpicture}

  \pgfmathsetmacro{\st}{1/3}
  \pgfmathsetmacro{\ht}{4}
  \pgfmathsetmacro{\sf}{5}
  
  \clip (-5-0.01,-0.01) rectangle (5+0.01,3*\ht+0.01);
  
  \foreach \y in {0,1,2}{
  \pgfmathsetmacro{\sh}{\y*\ht}
  \begin{scope}[yshift=\sh cm]
  
  \fill [black!35] (-5,0) rectangle (5,\st);
  \pgfmathsetmacro{\mx}{\ht-\st}
  \pgfmathsetmacro{\htt}{2*\ht}
  \pgfmathsetmacro{\stt}{2*\st}
  \foreach \xx in {-\htt,-\ht,0,\ht}{
    \fill [black!35] (\xx+\sf*\st,0) rectangle ++(\st,\ht);
    \foreach \x in {\st,\stt,...,\mx}{
      \fill [black!35] (\xx+\x+\sf*\st,\x) rectangle ++(\st,\st);
      \fill [black!15] (\xx+\x+\st+\sf*\st,\x) rectangle (\xx+\ht+\sf*\st,\x+\st);
    }
  }
  \draw [step=\st] (-5,0) grid (5,\ht);
  
  \coordinate (A) at (0,3+5/6);
  \coordinate (B) at (4,1+5/6);
  \coordinate (O) at (0,0);
  \coordinate (R) at (\ht,\ht);
  \coordinate (L) at (-\ht,\ht);
  
  \draw [ultra thick] (-5,0) -- (5,0);
  \draw [ultra thick] (0,0) -- (0,\ht);
  \draw [name path=PL,ultra thick] (O) -- (L);
  \draw [name path=PR,ultra thick] (O) -- (R);
  \path [name path=AB] (A) -- (B);
  
  \draw [thick,densely dotted,name intersections={of=AB and PR,by={C}}] (A) -- (C);
  \path [name path=CD] (C) -- ++(-6,-3);
  \draw [thick,densely dotted,name intersections={of=PL and CD,by={D}}] (C) -- (D);
  \path [name path=DE] (D) -- ++(2,-1);
  \draw [thick,densely dotted,name intersections={of=PR and DE,by={E}}] (D) -- (E);
  \path [name path=EF] (E) -- ++(-2,-1);
  \draw [thick,densely dotted,name intersections={of=PL and EF,by={F}}] (E) -- (F);
  
  \end{scope}
  }
  
  \end{tikzpicture}
  \end{center}
  \caption{A schematic diagram of a configuration of $X_{i,j}$.}
  \label{fig:AllTogether}
  \end{figure}
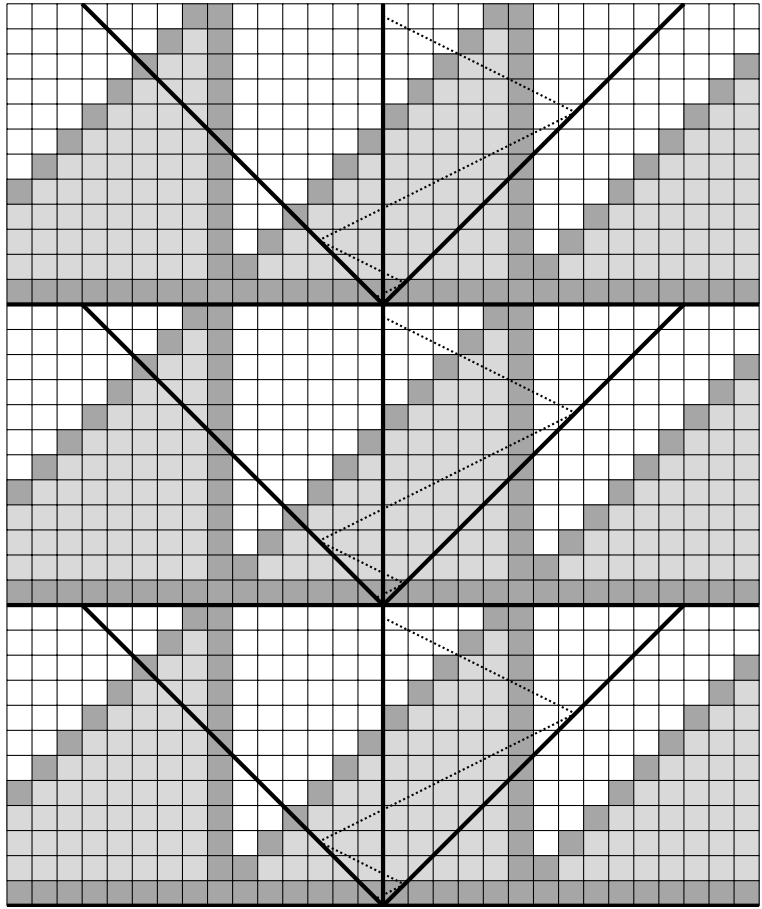
  
  We make one final modification to $X_{i,j}$.
  Suppose that the distance between the leftmost symbols of $v_i$ and $v_j$, or equivalently the size of the grid cells, is some $m \leq m_0$.
  In this case we allow the tile machine layer to be filled with a special blank symbol, so that no computation takes place and no tiling is simulated.
  We also require that for every $H(m)$ consecutive horizontal rows there is exactly one row of $\#$-symbols in the verification machine layer, which means that the machine $M_{i,j}$ is guaranteed to be present and have enough time to finish its computation before restarting periodically.
  These modifications can be realized by local rules, since they concern finitely many grid sizes and finitely many local patterns of the layers.
  The reason for these additions is that in too small grid cells, there is not enough space for the machines $M$ to simulate a suitable tile set $T_n$, but we must still force $M_{i,j}$ to perform its computation.
  This concludes the definition of the SFT $X$.
  
  It remains to prove two claims: that the SFT $X$ is countable, and that $\phi$ is a factor map from $X$ onto $Y$.
  We begin with the former claim.
  Let $z = {}^\infty u_{i-1} v_i u_i^{n_i} v_{i+1} \cdots v_j u_j^\infty \in Z$ be a configuration whose outermost period breakers are $v_i$ and $v_j$ for some $i < j$, and let $x \in X$ be a configuration whose image layer contains $z$.
  There are countably many choices for $z$, and we claim that there are countably many choices for $x$ once $z$ is fixed.
  For each $i \leq a < b \leq j$, the size of the grid cells and the positions of the vertical grid lines in $X_{a,b}$ are determined by the positions of the words $v_a$ and $v_b$.
  For other choices of $a < b$, the grid of $X_{a,b}$ has only one or zero vertical and horizontal lines, since at most one of the words $v_a$ and $v_b$ is present on the image layer.
  We call there layers $X_{a,b}$ \emph{degenerate}.
  For the positions of the horizontal lines of each grid (degenerate or not) we have a countable number of choices.
  
  The tile machine layer and verification machine layer of each $X_{a,b}$ work in unison to guarantee countability.
  If $X_{a,b}$ is degenerate, then there are only finitely many copies of the machines $M$ and $M_{a,b}$, so there are countably many choices for the computations.
  Due to our method of implementing the simulations, the set of degenerate configurations where no computation takes place is likewise countable.
  Suppose then that $X_{a,b}$ is not degenerate, and that the position of the grid has been fixed.
  If the grid is sparse enough, then the tile machine layer is not blank.
  Each grid cell hosts a computation of $M$, which simulates a tile of $T_n$, for some $n \in \N$ that all the machines agree on.
  The number of valid tilings of $T_n$ is countable, and each of them contains a grid of side length at most $g(n)$.
  Each horizontal line of $T_n$ forms a reset row that causes a new simulation of $M_{a,b}$ to be initialized at the lower left corner of the grid cell on the central column.
  Since $M_{a,b}$ is deterministic, each simulated copy performs the same computation.
  If the grid is dense, then the tile machine layer is blank, and $M_{a,b}$ is forced to perform its computation periodically due to the additional rules we imposed in this case.
  In any case, $x$ is completely determined by the positions of the $\#$-rows on the verification machine layer, for which we have countably many choices.
  Hence there are countably many choices for $x$.
  
  Finally, if the image layer contains a degenerate configuration of the type ${}^\infty u_{i-1} v_i u_i^\infty$ or ${}^\infty u_i^\infty$, then each layer $X_{a,b}$ is also degenerate, and has countably many choices for its contents as above.
  We have shown that $X$ is a countable SFT.
  
  Next, we claim that $\phi(x) \in Y$ for each $x \in X$.
  Suppose that $x$ has an image layer whose rows have the form $z = {}^\infty u_{i-1} v_i u_i^{n_i} v_{i+1} \cdots v_j u_j^\infty \in Z$.
  If the grid cells of $X_{i,j}$ have width less than $m_0$, then the tile machine layer is blank, and the verification machine layer periodically contains a computaion of $M_{i,j}$.
  If the grid cells have width at least $m_0$, then the tile machine layer is not blank.
  In particular, the copies of $M$ on the tile machine layer simulate a tiling of some $T_n$, which causes reset rows to occur periodically, and each of those initializes a simulated computation of $M_{i,j}$ on the verification machine layer.
  In both cases, $M_{i,j}$ performs a successful computation, so we have two possibilities: either $z \in P(Y)$, in which case we are done, or $z \notin P(Y)$, in which case the machine finds a number $p \in \N$ such that the word $w' = u_{i-1}^p v_i u_i^{n_i} v_{i+1} \cdots v_j u_j^p$ does not occur in $P(Y)$.
  After this, it scans $|u_{i-1}|^p$ cells to the left of $v_i$ and $|u_j|^p$ steps to the right of $v_j$, finds the word $w'$, and halts with a tiling error.
  This is a contradiction, so we must have $z \in P(Y)$.
  We have shown that $\phi(x) \in Y$.
  
  We claim that $\phi : X \to Y$ is surjective.
  Let $y \in Y$ be arbitrary, and let $z = {}^\infty u_{i-1} v_i u_i^{n_i} v_{i+1} \cdots v_j u_j^\infty \in P(Y) \subset Z$ be its central row.
  We construct a configuration $x \in X$ that has $y$ on its image layer.
  Let $1 \leq a < b \leq k$ be arbitrary.
  If $a < i$ or $j < b$, then the layer $X_{a,b}$ of $x$ can be degenerate: the grid layer contains a single vertical line on the leftmost symbol of $v_a$ or $v_b$, if either is present, or no vertical line at all.
  It contains no horizontal line.
  The tile machine layer and the verification machine layer contain no machines, just infinite regions tiled with copies of a single tile.
  
  Suppose then that $i \leq a < b \leq j$.
  On the grid layer of $X_{a,b}$, we have two vertical lines on the leftmost symbols of $v_a$ and $v_b$ with distance $m > 0$, and no other vertical lines between them.
  If $m \leq m_0$, then we fill the tile machine layer with blank symbols, and place rows of $\#$-symbols at regular intervals of length $H(m)$ on the verification machine layers.  
  Suppose now $m > m_0 = \exp(G(n_0))$, and let $n \geq n_0$ be the largest natural number with $m > \exp(G(n))$.
  On the tile machine layer of $X_{a,b}$, we can hence simulate a copy of $M$ in each grid cell, and they have enough space to simulate tiles of the set $T_n$.
  Furthermore, we have $m \leq \exp(G(n-1))$, and from equation~\eqref{eq:gBound} we obtain $g(n) \geq H(\exp(G(n))) \geq H(\exp(G(n-1))) \geq H(m)$ since $G$ and $H$ are non-decreasing.
  Thus the distance between two reset rows can be at least $H(m)$ cells.
  
  We claim that in both cases, the spaces between these rows can be filled with valid computations of $M_{a,b}$.
  If ${}^\infty u_{a-1} v_a u_a^{n_a} \cdots v_b u_b^\infty \in P(Y)$, then the machine scans this word and enters the final state before $H(m)$ steps.
  Otherwise, there exists $p \in \N$ such that $w = u_{a-1}^p v_a u_a^{n_a} \cdots v_b u_b^p$ does not occur in $P(Y)$, and hence does not occur in $z$.
  The machine computes this $p$, reads the word of $z$ containing $v_a u_a^{n_a} \cdots v_b$, verifies that it is not equal to $w$, and enters the final state before $H(m)$ steps.
  
  We have shown that these layers form a valid configuration of $X_{a,b}$ consistent with the image layer $y$.
  Thus $\phi$ is surjective, and hence $X$ is a countable SFT cover of $Y$.
\end{proof}

\section{Conclusions}

We have provided simple examples of countable sofic shifts that have no countable SFT covers.
All of them are vertically constant, and rely on Lemma~\ref{lem:UniformlyRecurrent} for the proof of this property.
It seems that periodicity is an essential component of our proof technique and cannot be dispensed with easily.
We have no examples of two-dimensional countable sofic shifts without countable SFT covers where this property does not follow from the existence of a periodic component of some kind.
In \cite{We17} it was shown that all effectively closed subsystems of the \emph{distinct square shift} are sofic.
The distinct square shift is the two-dimensional binary shift space $X$ whose configurations consist of square patterns of $1$s with disjoint borders and distinct sizes in a background of $0$s.
It seems likely that some sufficiently complex countable subshifts of $X$ would have no countable SFT cover, but we are currently unable to prove this.

On the other hand, we do not know whether the countable cover conditions are sufficient for the existence of a countable SFT cover in all vertically constant shift spaces.
Let us explicitly state this as an open problem.

\begin{question}
  \label{q:TheQuestion}
  Is it true that a vertically constant shift space is a countably covered sofic shift if and only if it satisfies the countable cover conditions?
\end{question}

Theorem~\ref{thm:Converse} provides a positive answer for shift spaces whose rows come from a one-dimensional countable sofic shift, and we have provided examples outside of this class.
It is not clear how to extend our construction to a larger class of shift spaces, since it depends on the simple structure of one-dimensional countable sofic shifts, namely the bounded number of period breakers.

Our proof of Lemma~\ref{lem:UniformlyRecurrent} uses Zorn's lemma, and thus relies on some version of the Axiom of Choice.
We are not certain whether a different approach could avoid its use in the special case of countable SFTs.

Finally, as stated in the Introduction, this research was motivated by the equal entropy SFT cover problem for multidimensional sofic shifts, and in particular the question whether each zero entropy sofic shift has a zero entropy cover.
As our approach relies on periodicity and the topological properties of countable shift spaces, it cannot be directly applied to all zero entropy sofic shifts.
Furthermore, the constructions of \cite{DuRoSh12,AuSa13} give zero entropy SFT covers to all vertically constant sofic shifts, so counterexamples cannot be found inside this class.

\section*{Acknowledgements}

The author is thankful to Andrei Romashchenko, Bruno Durand and Pierre Guillon for discussions and comments on this article, and to Ville Salo for Example~\ref{ex:Pi01Needed}.

\bibliographystyle{plain}
\bibliography{StripesBib}

\end{document}